\let\oldmarginpar\marginpar
\renewcommand\marginpar[1]{\oldmarginpar[\raggedleft\footnotesize #1]%
{\raggedright\footnotesize #1}}
\newcommand{\Q}{\mathbb{Q}}
\newcommand{\Z}{\mathbb{Z}}
\newcommand{\bfloor}[1]{\left\lfloor #1\right\rfloor}
\theoremstyle{plain}
\newtheorem{theorem}{Theorem}[section]
\newtheorem{lemma}[theorem]{Lemma}
\newtheorem{prop}[theorem]{Proposition}
\newtheorem{conjecture}{Conjecture}[section]
\theoremstyle{definition}
\newtheorem{remark}[theorem]{Remark}
\newtheorem*{namedtheorem}{\theoremname}
\newcommand{\theoremname}{testing}
\title[A note on quasi-alternating Montesinos links]{A note on quasi-alternating Montesinos links}
\author[A.\ Champanerkar]{Abhijit Champanerkar}
\address{ Department of Mathematics, College of Staten
Island, City University of New York, Staten Island, New
York 10314 - and - Mathematics Program, The Graduate Center, City
University of New York, 365 Fifth Avenue, New York, New York
10016 }
\email{abhijit@math.csi.cuny.edu}
\thanks{The first author gratefully acknowledges support by the Simons Foundation and PSC-CUNY. The second author gratefully acknowledges support by PSC-CUNY} 
\author[P.\ Ording]{Philip Ording}
\address{Department of Mathematics, Medgar Evers College, City
University of New York, 1650 Bedford Ave,
Brooklyn, NY 11225}
\email{pording@mec.cuny.edu}
\begin{document}

\maketitle

\begin{abstract}
  Quasi-alternating links are a generalization of alternating links.
  They are homologically thin for both Khovanov homology and knot
  Floer homology.  Recent work of Greene and joint work of the first
  author with Kofman resulted in the classification of
  quasi-alternating pretzel links in terms of their integer tassel
  parameters.  Replacing tassels by rational tangles generalizes
  pretzel links to Montesinos links. 
  In this paper we establish conditions on the rational parameters of
  a Montesinos link to be quasi-alternating.  Using recent results on
  left-orderable groups and Heegaard Floer L-spaces, we also establish
  conditions on the rational parameters of a Montesinos link to be
  non-quasi-alternating. We discuss examples which are not covered by
  the above results.
 \end{abstract}

\section{Introduction}
The set $\mathcal{Q}$ of quasi-alternating links was defined by
Ozsv\'ath and Szab\'o \cite{Osvath-Szabo:DoubleCovers} as the smallest
set of links satisfying the following:\\
\begin{minipage}{3.8in}
\begin{itemize}
\item the unknot is in $\mathcal{Q}$
\item if link $\mathcal{L}$ has a diagram $L$ with a crossing $c$ such that 
\begin{enumerate}
\item both smoothings of $c$, $L_0$ and $L_\infty$ are in $\mathcal{Q}$
\item $\det(L_0)\neq 0 \neq \det(L_\infty)$
\item $\det(L)=\det(L_0)+\det(L_\infty)$
\end{enumerate}
then $\mathcal{L}$ is in $\mathcal{Q}$.
\end{itemize}
\end{minipage}
\begin{minipage}{2.4in}
\begin{center}
\includegraphics[width=2.4 in]{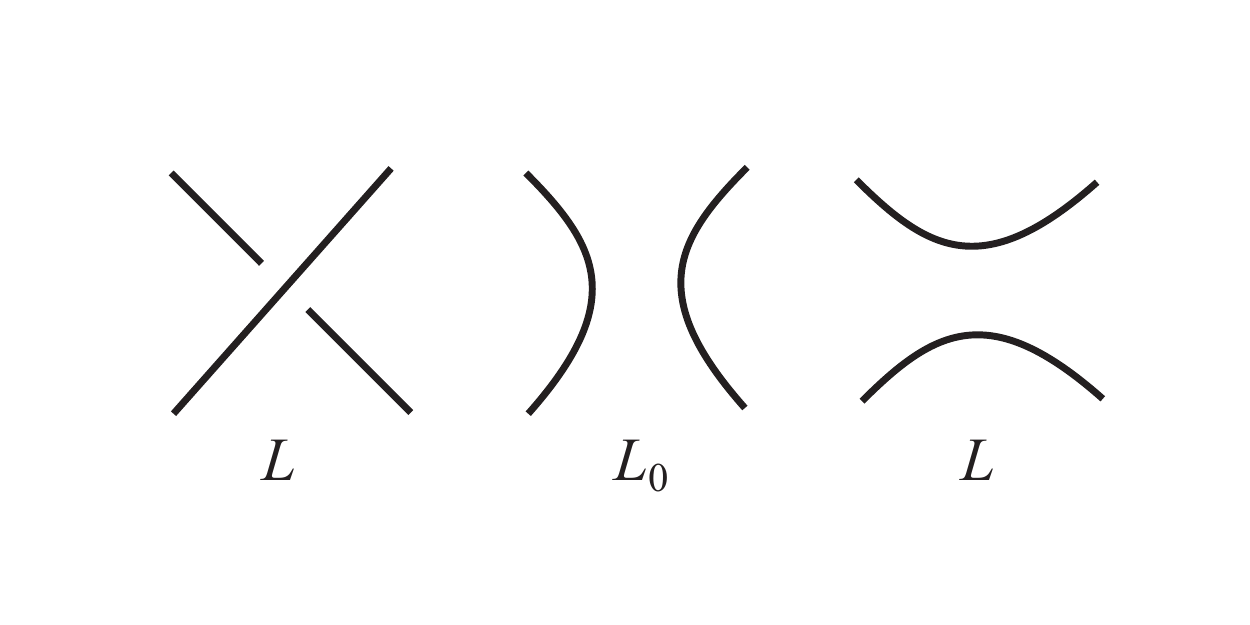}
\end{center}
\end{minipage}\\

The set $\mathcal{Q}$ includes the class of non-split alternating links. Like alternating links, quasi-alternating links are
homologically thin for both Khovanov homology and knot Floer
homology  \cite{Manolescu}. 
The branched double covers of quasi-alternating links are
L-spaces \cite{Osvath-Szabo:DoubleCovers}. These properties make
$\mathcal{Q}$ an interesting class to study from the knot homological
point of view. However the recursive definition makes it difficult to
decide whether a given knot or link is quasi-alternating.

The first author and Kofman showed that the
quasi-alternating property is preserved by replacing a
quasi-alternating crossing by any rational tangle extending the
crossing, and they used it to give a sufficient condition for pretzel links
to be quasi-alternating \cite{Champanerkar}. 
Subsequently Greene showed that this
condition was necessary and provided the first examples of
homologically thin, non-quasi-alternating links \cite{Greene}.  Results in
\cite{Champanerkar} and \cite{Greene} provide a complete
classification of quasi-alternating pretzel links. 

Using the structure and symmetry of Montesinos links and their
determinants, we generalize the sufficient conditions given in
\cite{Champanerkar} and \cite{Greene} to provide a sufficient
condition for Montesinos links to be quasi-alternating, in terms of
their rational parameters (Theorem \ref{thm:qam}).  Using recent
results on left-orderable groups, Heegaard Floer L-spaces and branched
double covers of Montesinos links we also obtain conditions on the
rational parameters of a Montesinos link to be non-quasi-alternating
(Theorem \ref{thm:nqam}). Furthermore we discuss families of examples
which are not covered by the above results.  
Our results include all known classes of quasi-alternating links
appearing in \cite{Champanerkar}, \cite{Greene} and \cite{Widmer}. See
also the recent preprint of Qazaqzeh, Chbili, and Qublan
\cite{Khaled}.\footnote{The results in this paper were obtained
  independently of \cite{Khaled}.}
Watson gives an iterative construction for obtaining every
quasi-alternating Montesinos link using surgery on a strongly
invertible L-space knot \cite{Watson}. It is an interesting problem to
determine the relation between Watson's construction and the
conditions in Theorem \ref{thm:qam}.

This paper is organized as follows: Section 2 defines Montesinos links
and related notation, Section 3 proves results about the structure and
symmetry of Montesinos links, Section 4 proves the determinant formula
for Montesinos links, and Sections 5 and 6 prove the main theorems and 
discuss examples.

\subsection*{Acknowledgements} The authors would like to thank Joshua 
Greene, Liam Watson and Steven Boyer for helpful conversations.

\section{Notation}

\subsection{ Fractions} For integers $a_i, 1\leq i \leq m$, $a_1 \neq 0$, let
$[a_m,a_{m-1},\dots,a_1]$ denote the continued fraction
$$[a_m,a_{m-1},\dots,a_1]:= a_m+\cfrac{1}{a_{m-1}+\cfrac{1}{\ddots+\cfrac{1}{a_1}}} \ .
$$
Let
$\displaystyle{t=\frac{\alpha}{\beta}}\in \Q$ with $\alpha,\beta$ relatively prime and $\beta > 0$. The
\emph{floor} of $t$ is $\displaystyle{\left\lfloor t
  \right\rfloor = \frac{\alpha - (\alpha \mod \beta)}{\beta}}$, and
the \emph{fractional part} of $t$ is $\displaystyle{\{t\} = \frac{\alpha \mod \beta}{\beta} < 1}$.  For $t \neq 0$, define $
\displaystyle{\widehat{t} = \frac{1}{\{\frac{1}{t}\} } > 1}$. For example,
if $t=\frac{-29}{9}$ then $\left\lfloor t
  \right\rfloor = -4,\ \{ t \} = \frac{7}{9}, \mathrm{\ and \ } 
\widehat{t}=\frac{29}{20}$. Note that if $t > 1$ then $\widehat{t}=t$. 

\subsection{ Rational tangles} 
 
We follow the original exposition due to Conway \cite{Conway}.
A \emph{tangle} is a portion of a link diagram enclosed by a circle that meets the link in exactly four points. 
The four ends of a tangle are identified with the compass directions NW, NE, SW, SE. 
Given a pair of tangles $s$ and $t$, the \emph{tangle sum}, denoted $s+t$, is formed by joining the NE, SE ends of $s$ to the NW, SW ends, respectively, of $t$. 
The elementary tangles $0,\pm 1, \infty$ are shown in Figure \ref{fig:tangleexample}.
Adding $n$ copies of the tangle $1$ and $\overline{1}=-1$ results in the \emph{integral tangles} $n=1+1+\dots+1$ and $\overline{n}=-n=\overline{1}+\overline{1}+\dots+\overline{1}$, respectively.
The \emph{tangle product}, denoted $st$, is the tangle obtained by first reflecting the diagram of $s$ in the plane through its NW-SE axis and then adding $t$. 
If $a_1,\dots,a_m$ are integral tangles, the tangle $a_1 a_2 \dots a_m:=((\dots(a_1 a_2)a_3\dots a_{m-1})a_m)$ is called a \emph{rational tangle}. See Figure \ref{fig:tangleexample}.
\begin{figure}[htbp]
\begin{center}
\includegraphics{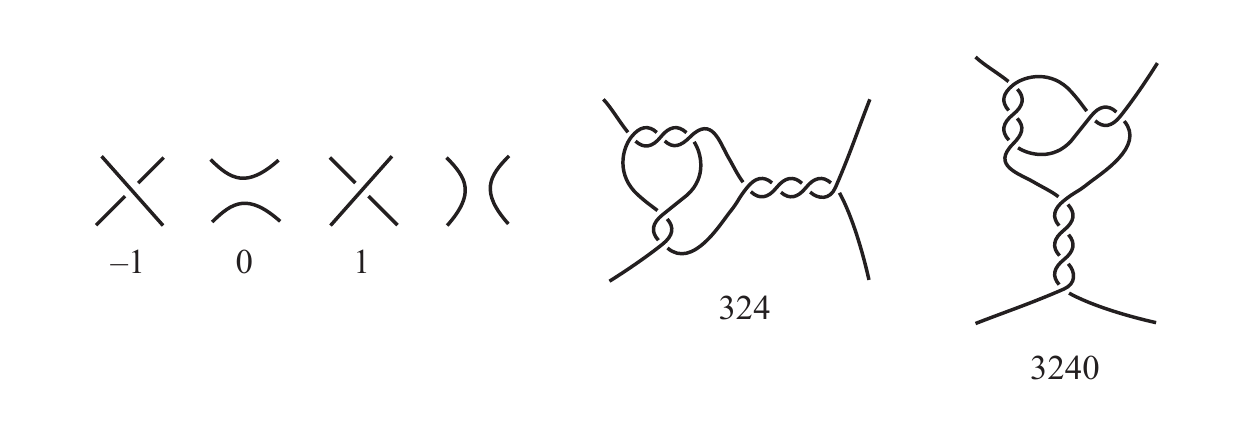}
\caption{Rational tangles are composed of sums and products of elementary tangles.}
\label{fig:tangleexample}
\end{center}
\end{figure}

Conway devised the following correspondence between rational tangles and continued fractions:
Let $t\neq 0, \pm 1$ be a rational number, and let $a_1, \ldots , a_m$ be integers such that $a_1 \geq 2$, $a_k \geq 1$ for $k=2,\ldots, m-1$, and $a_m \geq 0$.  
If $t=[a_m,a_{m-1},\dots,a_1]$, then $t$ corresponds to the positive rational tangle $a_1a_2\ldots a_m$.  
If $t=[-a_m,-a_{m-1},\dots,-a_1]$, then $t$ corresponds to the negative rational tangle $\overline{a_1}\overline{a_2}\ldots
\overline{a_m}$, where $\overline{a}$ denotes $-a$.  
Every rational tangle, except for the elementary tangles $0$, $\pm 1$, and
$\infty$, has a continued fraction expansion as one of the above
\cite{Conway}.
The product of a rational tangle with the zero tangle inverts the associated fraction; if $a_1\dots a_m$ corresponds to $t$, then $a_1\dots a_m0$ corresponds to $1/t$.
Notice that $a_1\dots a_m a_{m+1}$ is equivalent to $a_1\dots a_m0 + a_{m+1}$, which corresponds to the fraction $a_{m+1}+1/t$, and this explains the correspondence between rational tangles and continued fractions.

A \emph{flype} is a tangle equivalence between tangles $1+t$ and $t_h+1$, where $t_h$ is the rotation of tangle $t$ about its horizontal axis.
A \emph{positive flype} is the operation that replaces $t$ by the equivalent tangle $1+t_h+\overline{1}$, and a \emph{negative flype}
results in $\overline{1}+t_h+1$. 
For a rational tangle $t$, the tangle $t_h$ can be seen to be equivalent to $t$ by a sequence of generalized flype moves that transpose tassels above and below the horizontal axis. 

There are two ways to join the free ends of a tangle (without introducing further crossings) to form a link. 
Let $1*t$ denote the \emph{vertical closure} of tangle $t$ obtained by joining the NW end to the NE end and joining the SW end to the SE end.
Joining the NW/SW ends and the NE/SE ends produces the \emph{horizontal closure} of $t$, which is isotopic to $1*\overline{t}0$. See Figure \ref{fig:tangleclosure}. 
A \emph{rational link} is the closure of a rational tangle. 

\begin{figure}[t]
\begin{center}
\includegraphics{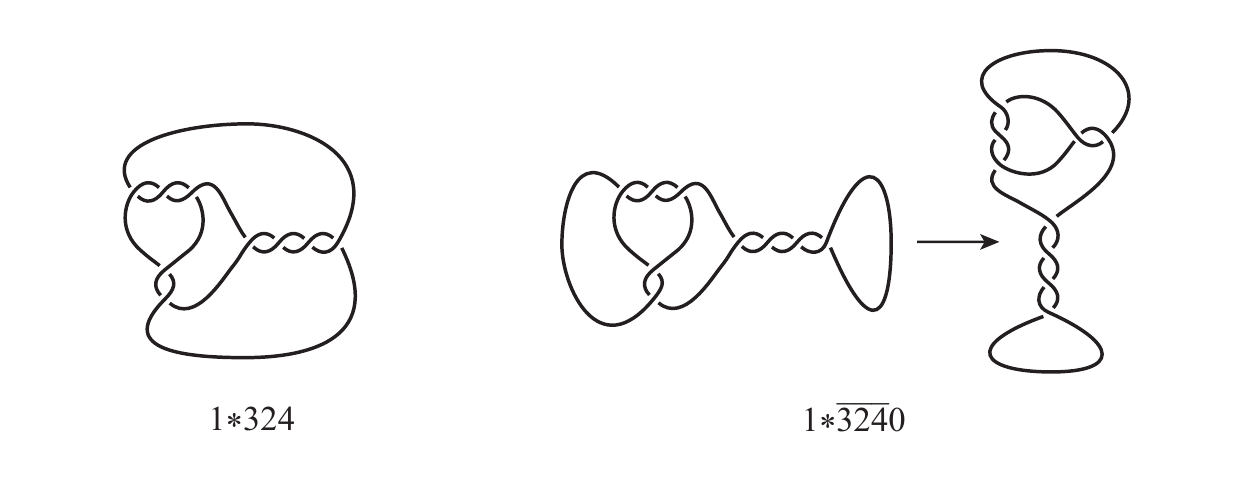}
\caption{The vertical closure (left) and horizontal closure (center) of tangle $324$, the latter of which is equivalent to the vertical closure of $\overline{324}0$ (right).}
\label{fig:tangleclosure}
\end{center}
\end{figure}

\subsection{ Montesinos links } 
Let $t_{i}\neq 0, \pm 1$, for $i=1,\ldots,p$, 
be a rational number with a continued fraction
expansion as above, and let $e$ be an integer.  A \emph{Montesinos
  link} is defined, using Conway notation, as $M(e;t_1, \ldots,
t_p)=1*(e+t_10+\ldots +t_p0)$. See Figure \ref{fig:montesinos}; the
dotted circle labeled $t_i$ contains tangle $t_i0$.  We define
$\displaystyle{\varepsilon= e + \sum_{i=1}^p
  \left\lfloor\frac{1}{t_i}\right\rfloor}$.  Note that this
presentation of Montesinos links differs slightly from that of
Burde-Zieschang \cite{Burde-Zieschang} and the one used by Greene
\cite{Greene} in that the sign of $e$ is reversed.  For example,
Figure \ref{fig:montesinosvar} illustrates the isotopy taking the link
of Figure \ref{fig:montesinos} into the form of a Montesinos link used in 
\cite{Greene}. 
If $t_i$ were $\pm 1$, then the
application of a flype would move the crossing left, where it could be
absorbed by the parameter $e$.

\begin{figure}[h!]
\begin{center}
\includegraphics{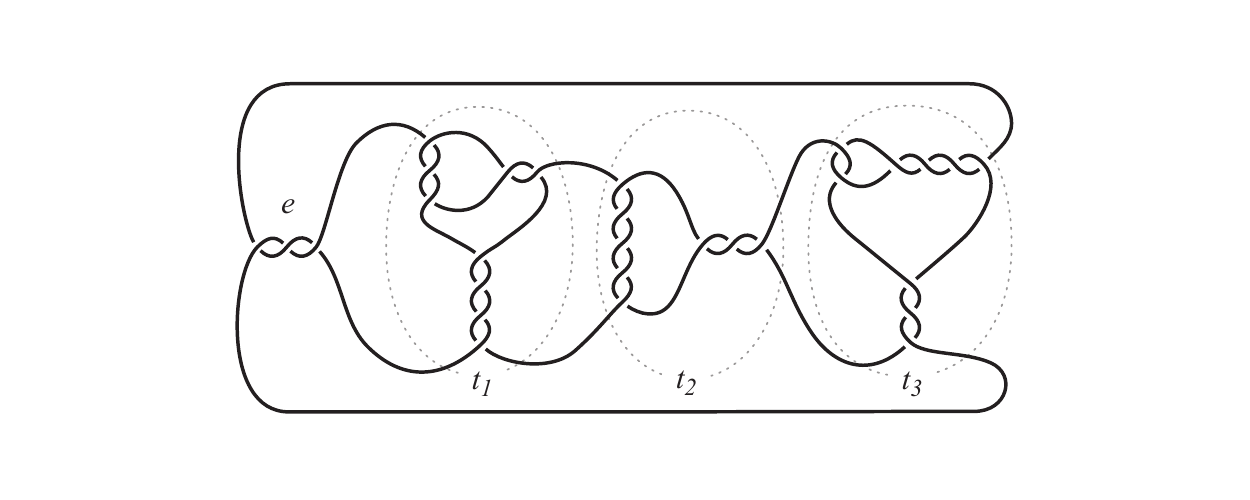}
\caption{Montesinos link $M(3;\frac{31}{7},\frac{5}{16},\frac{-29}{9}) =M(3;324,530,\overline{2}\overline{4}\overline{3})=M(5;\frac{31}{7},\frac{5}{1},\frac{29}{20})$.}
\label{fig:montesinos}
\end{center}
\end{figure}

\begin{figure}[htbp]
\begin{center}
\includegraphics[height=3in]{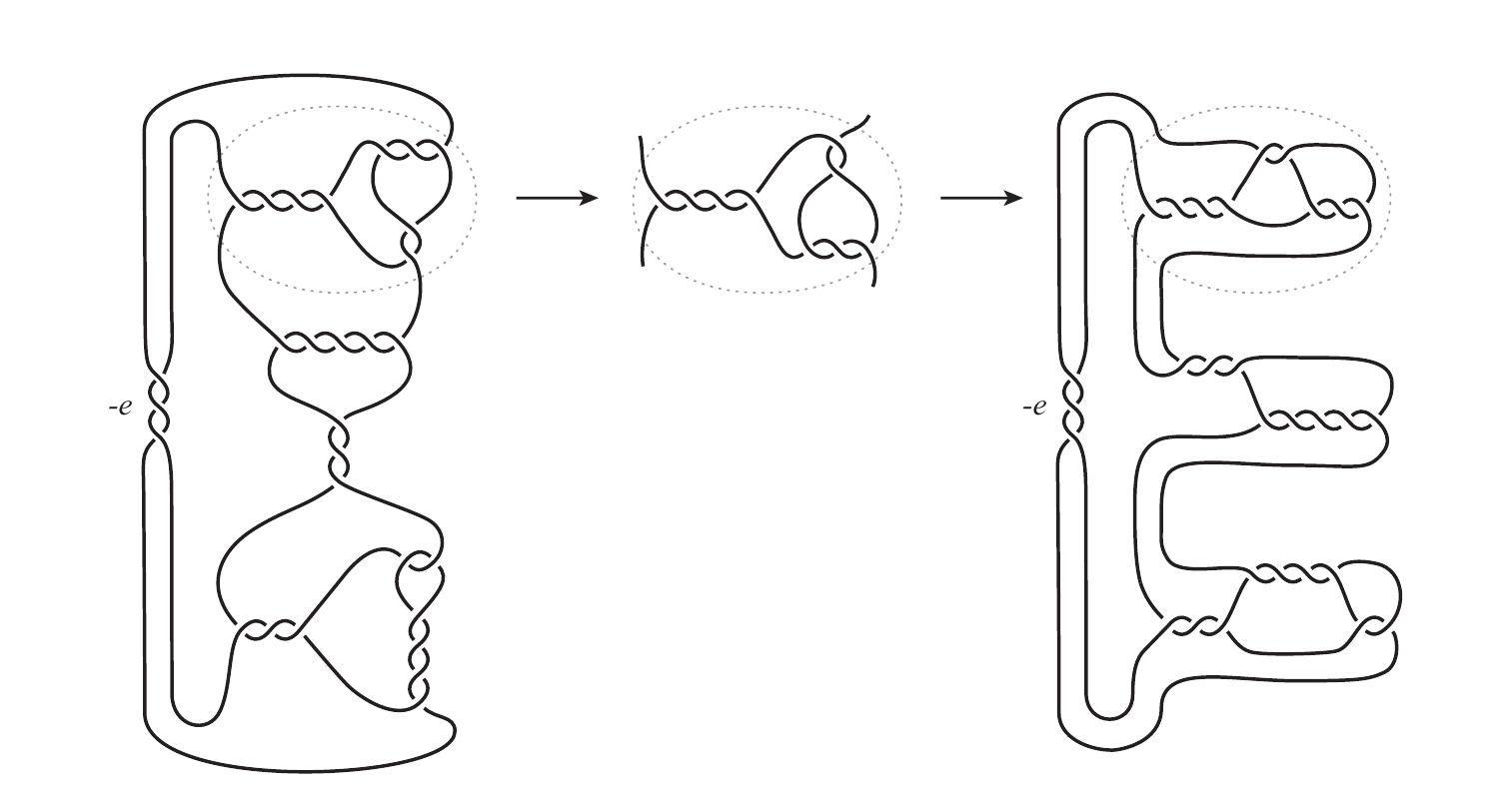}
\caption{Rotate the link of Figure \ref{fig:montesinos} clockwise 90 degrees, isotope the $-e$ crossings to the left side, and apply flypes to put each tangle into braid form.}
\label{fig:montesinosvar}
\end{center}
\end{figure}

\section{Classification of Montesinos links}

Let $L$ be the Montesinos link
$M(e;t_1,\dots,t_p)=M(e;\alpha_1/\beta_1,\ldots,\alpha_p/\beta_p)$. 

\begin{prop}\label{prop:2tangle}
If $p<3$, then $L$ is isotopic to a rational link. 
\end{prop}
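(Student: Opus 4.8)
The plan is to handle the three cases $p=0,1,2$ separately, reducing each to the closure of a single rational tangle. When $p=0$ the link $M(e)=1*(e)$ is the vertical closure of the integral tangle $e$; since every integral tangle is rational, $M(e)$ is a rational link (in fact the $(2,e)$ torus link). When $p=1$ I would observe that $t_1 0$ is a rational tangle with fraction $1/t_1$, and that adding the integral tangle $e$ on the left merely introduces $e$ horizontal twists, which keeps the tangle rational with fraction $e+1/t_1$. By the Conway correspondence this fraction has a continued fraction expansion, so $e+t_1 0$ is isotopic to a rational tangle and $M(e;t_1)=1*(e+t_1 0)$ is a rational link.

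The substantive case is $p=2$, where $M(e;t_1,t_2)=1*(e+t_1 0+t_2 0)$. Writing $R=e+t_1 0$ for the rational tangle produced above, the difficulty is the central tangle sum $R+t_2 0$: this is a sum of two non-integral rational tangles, which is \emph{not} itself rational, since (as the continued fraction discussion shows) a tangle sum only adds fractions when one summand is integral. Thus I cannot directly read off a continued fraction expansion of $R+t_2 0$. The approach I would take is to use the vertical closure to dissolve this sum. Using the product identity $XY=X0+Y$ together with the fact that multiplication by the zero tangle inverts the fraction, one can rewrite $R+t_2 0$ as a tangle product and then slide $t_2 0$ around the closure arcs; combined with the relation between the two closures, the vertical closure of the sum should become the closure of a single tangle in which the crossings of $t_2$ are brought into series with those of $R$. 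The expectation is that this straightened tangle is rational, with a continued fraction interleaving the expansions of $t_1$ and $t_2$.

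The delicate point, and the step I expect to be the main obstacle, is verifying rigorously that this rearrangement yields a genuine rational tangle rather than merely a two-bridge presentation up to isotopy; keeping track of the reflections $t\mapsto t0$ and of the flypes needed to straighten $t_2 0$ is where the bookkeeping is easiest to get wrong. For this reason the route I would actually trust is the branched double cover: the double cover $\Sigma_2\big(M(e;t_1,\dots,t_p)\big)$ is a Seifert fibered space over $S^2$ with $p$ exceptional fibers, and for $p\le 2$ a Seifert fibered space over $S^2$ with at most two exceptional fibers is a lens space. Since lens spaces are exactly the branched double covers of two-bridge (rational) links, this forces $M(e;t_1,t_2)$ to be rational, recovering $p=0,1$ uniformly and matching the classical fact that Montesinos links with at most two rational tangles are two-bridge \cite{Burde-Zieschang}.
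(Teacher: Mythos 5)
Your conclusion is correct, but you reach it by a genuinely different route from the paper --- and, notably, the diagrammatic argument you sketched and then abandoned is exactly the one the paper carries out. The paper's proof stays entirely within tangle calculus: it writes $t_1=[a_k,\dots,a_1]$ and $t_2=[b_\ell,\dots,b_1]$, applies $e$ flypes to the first tangle to move the $e$ crossings between the two tangles, and then isotopes and flypes the tassels $b_\ell, b_{\ell-1},\dots,b_1$, in that order, into series with the rest. The bookkeeping you feared closes up cleanly: the result is the single rational tangle $t=[b_1,\dots,b_\ell,e,a_k,\dots,a_1]$ (a concatenation with $e$ in the middle and one expansion reversed, not an interleaving as you guessed), and the parity of $k+\ell$ decides whether $L$ is $1*t$ or $1*\overline{t0}$. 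Your fallback via the branched double cover is valid but far heavier than you acknowledge: the assertion that ``lens spaces are exactly the branched double covers of two-bridge links'' requires, in the uniqueness direction, the theorem of Hodgson and Rubinstein on involutions of lens spaces, together with the degenerate cases (double cover $S^3$ forces the unknot by Waldhausen's solution of the Smith conjecture for involutions; double cover $S^1\times S^2$ forces the two-component unlink by Tollefson's classification), none of which you cite. What the covering-space route buys is uniformity over $p=0,1,2$ with no flype bookkeeping; what the paper's elementary route buys is self-containedness and, more importantly, the explicit fraction of the resulting rational link, which is what makes the base case of Proposition \ref{thm:e=0} (distinguishing the unlink $r=s$ from the quasi-alternating links $r\neq s$) immediate. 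If you intend the double-cover argument to stand alone, supply those references; otherwise, complete your first sketch --- it is the paper's proof.
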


\begin{proof}
Let $[a_k,\ldots,a_1]$ and $[b_{\ell},\ldots,b_1]$ be continued fraction expansions of $t_1$ and $t_2$, respectively. 
Applying $e$ flypes to the first tangle moves the $e$ crossings between the two tangles. Applying isotopies and flypes to the tassels $b_\ell, b_{\ell-1},\dots,b_1$, in that order, results in rational tangle form. See Figure \ref{fig:2tanglemontesinos}. The parity of $k+\ell$ determines the appropriate tangle closure. 
It follows that $L$ is isotopic to the rational link $1*t$ if $k+\ell$ is odd and $1*\overline{t0}$ if $k+\ell$ is even, where $t=[b_1,\dots,b_\ell,e,a_k,\dots,a_1]$.
\end{proof}

\begin{figure}[htbp]
\begin{center}
\includegraphics{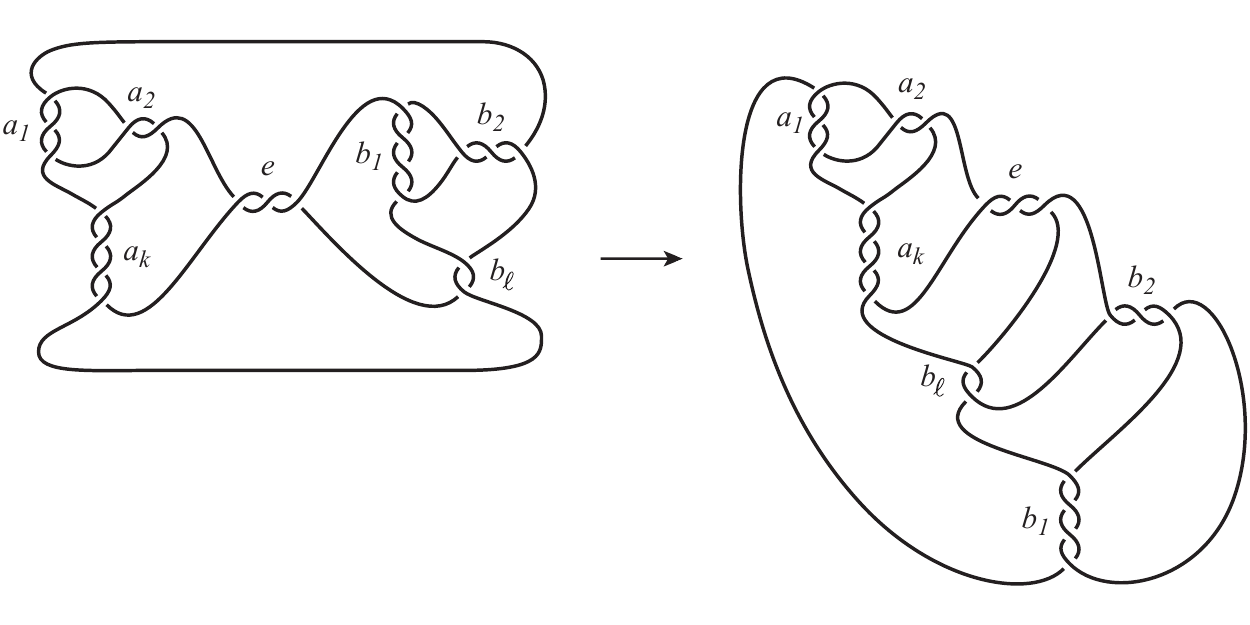}
\caption{Two-tangle Montesinos links are rational links.}
\label{fig:2tanglemontesinos}
\end{center}
\end{figure}

If
$p\geq 3$ 
then $L$ is classified by the rational number
$e+\sum_{i=1}^p \beta_i/\alpha_i$ and the
ordered set of fractions 
$\left(\left\{\beta_1/\alpha_1\right\},\ldots,\left\{\beta_p/\alpha_p\right\}\right)$ 
up to cyclic permutation
and reversal of order \cite{Bonahon} (see also
\cite{Burde-Zieschang}).  It follows that $\varepsilon$ defined above is an
invariant of Montesinos links.

\begin{prop}
  \label{lem:mont-reduce} 
  The Montesinos link $ M(e; t_1, \dots, t_p)$ is isotopic to
  $M(\varepsilon;\widehat{t_1}, \ldots, \widehat{t_p})$.
\end{prop}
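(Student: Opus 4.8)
The plan is to reduce the statement to the classification of Montesinos links quoted immediately before the proposition. Write $t_i = \alpha_i/\beta_i$, so that $1/t_i = \beta_i/\alpha_i$ and hence $\{1/t_i\} = \{\beta_i/\alpha_i\}$. For $p \ge 3$ the link $M(e; t_1, \dots, t_p)$ is determined by the pair consisting of the rational number $e + \sum_{i=1}^p \frac{1}{t_i}$ and the ordered tuple $(\{1/t_1\}, \dots, \{1/t_p\})$, taken up to cyclic permutation and reversal. So it suffices to show that replacing $e$ by $\varepsilon$ and each $t_i$ by $\widehat{t_i}$ leaves both of these invariants fixed.

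First I would record two consequences of the definitions in Section 2. By definition $\widehat{t_i} = 1/\{1/t_i\}$, so $1/\widehat{t_i} = \{1/t_i\}$, a number in $(0,1)$. Being its own fractional part, it satisfies $\{1/\widehat{t_i}\} = \{1/t_i\}$; and being the reciprocal of a number in $(0,1)$, it satisfies $\widehat{t_i} > 1$. The first identity shows that the tuple of fractional parts is preserved termwise, hence a fortiori up to cyclic permutation and reversal. The inequality shows that each $\widehat{t_i} \ne 0, \pm 1$, so that $M(\varepsilon; \widehat{t_1}, \dots, \widehat{t_p})$ is again a genuine $p$-tangle Montesinos link in the sense of Section 2, and the classification applies to it with the same $p$.

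Next I would check the rational-number invariant. Using $\lfloor s \rfloor + \{s\} = s$ together with $\varepsilon = e + \sum_i \lfloor 1/t_i \rfloor$ and $1/\widehat{t_i} = \{1/t_i\}$, I compute
$$\varepsilon + \sum_{i=1}^p \frac{1}{\widehat{t_i}} = e + \sum_{i=1}^p\left(\lfloor 1/t_i \rfloor + \{1/t_i\}\right) = e + \sum_{i=1}^p \frac{1}{t_i},$$
which is exactly the rational number attached to $M(e; t_1, \dots, t_p)$. Both invariants therefore agree, and the classification yields the isotopy when $p \ge 3$. The case $p < 3$ is not covered by that classification, so I would handle it separately via Proposition \ref{prop:2tangle}, which presents $L$ as a rational link, and verify that the same substitution preserves the associated fraction. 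I expect the main obstacle to be the bookkeeping at this boundary rather than the two-line arithmetic above: one must confirm that each $\widehat{t_i}$ is a legitimate tangle parameter (so that the formula for $\widehat{t_i}$ is never invoked when $1/t_i \in \Z$ and $\{1/t_i\}=0$) and that the normalization conventions for the rational-link invariant in the $p<3$ case line up with those used in the $p \ge 3$ classification.
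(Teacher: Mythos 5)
Your proof is correct in substance but takes a genuinely different route from the paper. The paper argues directly at the tangle level, uniformly in $p$: each tangle $t_i0$ is split as the sum of its integral and fractional parts, so that $t_i0$ becomes $\lfloor 1/t_i\rfloor + \widehat{t_i}0$ (for positive tangles this is just the decomposition $a_1\dots a_m = a_m + a_1\dots a_{m-1}0$; for negative tangles one inserts a negative flype first), after which the integer summands are absorbed into $e$ to produce $\varepsilon$. That argument is elementary, constructive, and requires no case split on $p$. You instead invoke Bonahon's classification quoted just before the proposition, reducing everything to the two identities $\{1/\widehat{t_i}\}=\{1/t_i\}$ and $\varepsilon + \sum_{i=1}^p 1/\widehat{t_i} = e + \sum_{i=1}^p 1/t_i$, both of which you verify correctly, together with the sanity check $\widehat{t_i}>1$ ensuring the target is again a legitimate $p$-tangle Montesinos link. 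This buys a very short arithmetic verification at the cost of much heavier machinery, and it covers only $p\geq 3$: for $p<3$ you correctly point to Proposition \ref{prop:2tangle} and the classification of rational links, but you leave that verification as a sketch, so to make the proof complete you would still need to carry out the fraction bookkeeping there (or observe that the paper's local tangle identity proves the statement for every $p$ at once, which is precisely why the paper's proof avoids any boundary case). The degenerate possibility $\{1/t_i\}=0$ that you flag is shared by the paper's statement itself, since $\widehat{t_i}$ is then undefined; such parameters are implicitly excluded, so it does not affect either argument.
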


\begin{proof}
A rational tangle $t$ is equivalent to the tangle sum of its integral and fractional parts. In particular, a positive rational tangle $a_1\dots a_m$ is equivalent to $a_m + a_1\dots a_{m-1}0$, where $a_m$ and $a_1\dots a_{m-1}0$ correspond to $\lfloor t\rfloor$ and $\{t\}$, respectively. 
A negative rational tangle $\overline{a_1}\dots\overline{a_m}$ is equivalent to $\overline{a_m}+\overline{a_1}\dots\overline{a_{m-1}}0$, and a negative flype results in the equivalent tangle $\overline{1}+(\overline{a_m}+\overline{a_1}\dots\overline{a_{m-1}}0)+1$. But, $(\overline{1}+\overline{a_m})$ and $(\overline{a_1}\dots\overline{a_{m-1}}0+1)$ are equivalent to $\lfloor t\rfloor$ and $\{t\}$.

The proposition follows by applying the above tangle decomposition to each tangle of the Montesinos link :
\begin{align*}
M(e; t_1, \dots, t_p) &= 1*(e + t_10 + \dots + t_p0) \\
& = 1*\left(e + \left(\left\lfloor t_10 \right\rfloor + \{t_10 \}  \right) + \dots + \left(\left\lfloor t_p0 \right\rfloor + \{t_p0 \}  \right) \right) \\
&=1*\left(
\varepsilon + \widehat{t_1}0 + \dots + \widehat{t_p}0
\right) = M(\varepsilon;\widehat{t_1}, \dots, \widehat{t_p}).\qedhere
\end{align*}
\end{proof}

The link $M(\varepsilon;\widehat{t_1}, \ldots, \widehat{t_p})$ is known as the
{\it reduced form} of the Montesinos link $M(e; t_1, \dots, t_p)$.

\begin{lemma}
\label{lem:flype}
Let $t_i=\frac{\alpha}{\beta}$, for some $i$, and let $e$ be any integer. 
\begin{enumerate}
\item (Positive flype) If $t_i > 0$, then $M(e;t_1,\ldots,t_p)=M(e+1;t_1,\ldots,t_{i-1},t_i^f,t_{i+1},\ldots,t_p)$, where 
$\displaystyle{t_i^f=\frac{\alpha}{\beta- \alpha}}$.
\item (Negative flype) If $t_i < 0$, then $M(e;t_1,\ldots,t_p)=M(e-1;t_1,\ldots,t_{i-1},t_i^f,t_{i+1},\ldots,t_p)$, where 
$\displaystyle{t_i^f=\frac{\alpha}{\beta + \alpha}}$.
\end{enumerate}
\end{lemma}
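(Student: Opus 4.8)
The plan is to prove both parts by extracting a single $\pm 1$ crossing from the tangle $t_i0$ using Conway's fraction correspondence, and then flyping that crossing across the intervening rational tangles until it can be absorbed into the integral tangle $e$. I will treat the positive case in detail, the negative case being identical up to signs.

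First I would record the fraction bookkeeping. Since a tangle sum with an integral tangle adds to the Conway fraction (as recalled in Section 2, the tangle $a_1\dots a_m0+a_{m+1}$ has fraction $a_{m+1}+1/t$), and $t_i0$ has fraction $1/t_i=\beta/\alpha$, I compute that $t_i^f0$ has fraction $1/t_i^f=(\beta-\alpha)/\alpha=\beta/\alpha-1$ in the positive case and $(\beta+\alpha)/\alpha=\beta/\alpha+1$ in the negative case. Hence $t_i0$ and $1+t_i^f0$ (respectively $\overline1+t_i^f0$) are rational tangles with the same fraction, so by Conway's classification they are equivalent. This isolates a single positive (respectively negative) crossing adjacent to the tangle $t_i^f0$ inside the horizontal sum defining $M(e;\dots)$.

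Next I would move this crossing into the $e$ position. After the substitution the Conway sum reads $e+t_10+\dots+t_{i-1}0+(1+t_i^f0)+t_{i+1}0+\dots+t_p0$, so the extracted crossing sits between $t_{i-1}0$ and $t_i^f0$. Using the flype equivalence $1+t\simeq t_h+1$ (and its mirror for the negative crossing), together with the fact that each rational tangle satisfies $(t_j0)_h\simeq t_j0$, I can slide the crossing leftward past $t_{i-1}0,\dots,t_10$ one tangle at a time, each slide leaving the corresponding tangle unchanged. The crossing then lies immediately to the right of the integral tangle $e$, where $e+1$ is again an integral tangle; this yields $M(e+1;t_1,\dots,t_{i-1},t_i^f,t_{i+1},\dots,t_p)$. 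The negative case replaces the $+1$ by $\overline1$ and produces $e-1$.

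The main obstacle is justifying that sliding the crossing does not disturb the other rational parameters: the flype $1+t\simeq t_h+1$ a priori replaces each intervening tangle $t_j0$ by its horizontal rotation $(t_j0)_h$, so the argument genuinely relies on the equivalence $(t_j0)_h\simeq t_j0$ for rational tangles asserted in Section 2. As a consistency check I would verify that the invariant $\varepsilon=e+\sum_{i}\lfloor 1/t_i\rfloor$ is preserved: in the positive case $\lfloor 1/t_i^f\rfloor=\lfloor 1/t_i\rfloor-1$ exactly compensates the increase of $e$ by $1$, and symmetrically in the negative case, which is consistent with the two diagrams representing isotopic links.
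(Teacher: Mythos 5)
Your proof is correct and uses essentially the same ingredients as the paper's: Conway's fraction arithmetic to identify $t_i0\pm\overline{1}$ with $t_i^f0$, the flype equivalence, and the invariance of rational tangles under horizontal rotation. The only difference is organizational --- the paper applies a single flype to the block $t_10+\dots+t_i0$, which deposits the $+1$ directly next to $e$ and a $\overline{1}$ next to $t_i0$ in one step, whereas you first extract the crossing from $t_i0$ and then transport it leftward by iterated single-tangle flypes; both reductions rest on exactly the same facts, so this is a cosmetic reordering rather than a different route.
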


\begin{proof}
Suppose $t_i>0$. 
In Conway notation, 
$$M(e;t_1,\ldots,t_p)=1*(e+t_10+\ldots+t_{i-1}0+t_i0+t_{i+1}0+\dots +t_p0).$$ 
A positive flype of the first $i$ rational tangles $t_1,\ldots,t_i$ results in the equivalent link
$$1*(e+1+(t_10+\ldots+t_{i-1}0+t_i0)_h+\overline{1}+t_{i+1}0+\dots +t_p0).$$
The horizontal rotation of a tangle sum is the sum of the summands horizontally rotated.
This fact and the invariance of rational tangles under horizontal rotation implies that the link is equivalent to 
$$1*(e+1+t_10+\ldots+t_{i-1}0+t_i0+\overline{1}+t_{i+1}0+\dots +t_p0).$$
Furthermore, 
$$t_i0+\overline{1}=(\alpha/\beta)0+\overline{1}=\beta/\alpha-1=(\beta-\alpha)/\alpha=(\alpha/(\beta-\alpha))0=t_i^f0,$$ as required.

Applying a negative flype, the $t_i<0$ case follows similarly.
\end{proof}

\begin{prop}
\label{prop:adq-alt}
  Let $L=M(e;t_1, \ldots, t_p)$ be a Montesinos link
and $\varepsilon$ as above.  
\begin{enumerate}
\item If $\displaystyle{\left|\varepsilon  + \frac{p}{2}\right|>\frac{p}{2}-1}$, then $L$ has an alternating diagram.
\item If $\displaystyle{\left|\varepsilon + \frac{p}{2}\right|<\frac{p}{2}-1}$, then $L$ has a non-alternating and adequate diagram.
\end{enumerate} 
\end{prop}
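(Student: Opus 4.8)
The plan is to work throughout with the reduced diagram $M(\varepsilon;\widehat{t_1},\dots,\widehat{t_p})$ supplied by Proposition \ref{lem:mont-reduce}, in which every tangle satisfies $\widehat{t_i}>1$ and hence has a continued fraction expansion with all entries positive; such a tangle is an alternating tangle, so the only possible failures of the alternating condition in the standard diagram of Figure \ref{fig:montesinos} occur where distinct tangles meet the horizontal band of $\varepsilon$ crossings. First I would fix a checkerboard coloring of this diagram and record, for each twist region, the \emph{type} of its crossings relative to the coloring. The standard fact I would invoke is that the crossings within a single twist region all share one type, that the two handedness choices (positive versus negative twisting) correspond to the two types, and that a connected diagram is alternating exactly when all of its crossings share a common type. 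Consequently the whole problem reduces to a bookkeeping of the signs of the $p$ tangle regions against the sign of the $\varepsilon$ band. I would also record a mirror symmetry: the mirror image of a Montesinos link is again a Montesinos link whose $\varepsilon$-invariant equals $-\varepsilon-p$, so that both hypotheses $|\varepsilon+p/2|>p/2-1$ and $|\varepsilon+p/2|<p/2-1$, together with the conclusions (which are preserved under mirroring), are invariant under $\varepsilon\mapsto -p-\varepsilon$. This lets me treat only $\varepsilon\ge 0$ in part (1) and only one end of the range in part (2).

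For part (1) the inequality is equivalent to $\varepsilon\ge 0$ or $\varepsilon\le -p$, and by the mirror symmetry it suffices to treat $\varepsilon\ge 0$, since $\varepsilon\le -p$ corresponds to a mirror with invariant $-\varepsilon-p\ge 0$ and mirroring preserves the existence of an alternating diagram. When $\varepsilon\ge 0$ every tangle region is positive and the $\varepsilon$ band, being a nonnegative number of horizontal crossings, carries the same sign, so with respect to the checkerboard coloring every crossing of the reduced diagram has one common type and the diagram is alternating. The one point I would verify carefully is that the coloring propagates compatibly across each junction between a tangle $\widehat{t_i}0$ and its neighbors; I expect this to follow from the local picture together with the decomposition of $\widehat{t_i}$ into its integral and fractional parts used in Proposition \ref{lem:mont-reduce}, since $\widehat{t_i}>1$ forces $\lfloor\widehat{t_i}\rfloor\ge 1$ and hence presents the expected crossing type to the band.

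For part (2) I would exhibit the reduced diagram itself as non-alternating and adequate. The inequality $|\varepsilon+p/2|<p/2-1$ is the integer range $-p+2\le\varepsilon\le -2$; here $\varepsilon<0$, so the $\varepsilon$ band is a negative twist region sitting among $p$ positive tangle regions, and by the sign bookkeeping above the diagram carries crossings of both types and is therefore non-alternating. For adequacy I would analyze the all-$A$ and all-$B$ Kauffman states. Because $\varepsilon\le -2$ the negative band contains at least two crossings, which I expect to force $+$-adequacy: no state circle of the all-$A$ resolution touches itself, the two crossings of the band preventing the short-circuiting along the band that a single crossing would create. The complementary inequality $\varepsilon\ge -p+2$ is, under $\varepsilon\mapsto -p-\varepsilon$, exactly the statement that the mirror band also has at least two crossings, so applying the same argument to the mirror (which interchanges $+$- and $-$-adequacy) yields $-$-adequacy; together these give adequacy. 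This also explains why the boundary values $\varepsilon=-1$ and $\varepsilon=-p+1$ are excluded, since there the minority band has a single crossing and the corresponding state circle closes up on itself.

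The main obstacle is the adequacy computation: organizing the Kauffman states of a diagram that superposes $p$ alternating rational tangles with a single twist band, and proving the absence of self-touching state circles uniformly in the $\widehat{t_i}$. The bookkeeping of checkerboard types across the tangle--band junctions -- making rigorous the claim that each tangle presents to the band a fixed crossing type determined only by its sign -- is the other place where care is needed, and I would handle it by reducing each $\widehat{t_i}0$ to its interaction with the band through the integral/fractional decomposition of Proposition \ref{lem:mont-reduce} rather than tracking the internal structure of each tangle.
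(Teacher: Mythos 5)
Your part (1) is sound and essentially matches the paper: the paper also reduces to $M(\varepsilon;\widehat{t_1},\dots,\widehat{t_p})$ via Proposition \ref{lem:mont-reduce}, notes that $\left|\varepsilon+\frac{p}{2}\right|>\frac{p}{2}-1$ forces $\varepsilon\geq 0$ or $\varepsilon\leq -p$, and observes that the reduced diagram is alternating when $\varepsilon\geq 0$; for $\varepsilon\leq -p$ it applies a positive flype to each of the $p$ tangles (Lemma \ref{lem:flype}) to get an all-negative alternating diagram, whereas you invoke the mirror via Lemma \ref{lem:ref-sym} --- an equivalent and legitimate substitute.

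The genuine gap is in part (2), and it sits exactly where you flagged ``the main obstacle.'' You propose to prove adequacy of the reduced diagram itself, with its band of $|\varepsilon|\geq 2$ negative crossings sitting among $p$ positive tangles, by a direct analysis of the all-$A$ and all-$B$ Kauffman states --- but you never carry this analysis out. Statements like ``I expect this to force $+$-adequacy'' and ``the two crossings of the band preventing the short-circuiting'' are expectations, not arguments: the self-touching of a state circle in such a diagram is a global phenomenon, arising from how the band resolution closes up through the tangle closures, not merely from the band's internal twist region, so checking the band locally does not suffice, and nothing in your sketch controls the global circles uniformly in the $\widehat{t_i}$. The paper avoids this entire computation with one move you are missing: since $1<|\varepsilon|<p-1$, apply a positive flype to each of the last $m=|\varepsilon|$ tangles (Lemma \ref{lem:flype}) to trade the whole band away, obtaining $L''=M(0;r_1,\dots,r_n,s_1,\dots,s_m)$ with $n=p-m\geq 2$ positive tangles $r_i>0$ and $m\geq 2$ negative tangles $s_j<0$. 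A Montesinos diagram with empty band and at least two positive and at least two negative rational tangles is the classically understood form whose reduced diagram is non-alternating and adequate, and the inequality $\left|\varepsilon+\frac{p}{2}\right|<\frac{p}{2}-1$ is precisely what guarantees $m\geq2$ and $n\geq2$. Without either this flype maneuver or a completed state-circle argument (which would in effect re-prove the adequacy criterion for Montesinos diagrams from scratch, and whose truth for your band-diagram you have not verified even in examples), your part (2) does not close. Your mirror trick for deducing $-$-adequacy from $+$-adequacy is structurally fine, but it inherits the same unproven core claim.
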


\begin{proof} $L$ is equivalent to $
    L'=M(\varepsilon;\widehat{t_1}, \ldots, \widehat{t_p})$ by Proposition
  \ref{lem:mont-reduce} above. 
The inequality $\left|\varepsilon + \frac{p}{2}\right|>\frac{p}{2}-1$ implies
  that $\varepsilon\geq 0$ or $\varepsilon\leq -p$.

  If $\varepsilon\geq 0$, then the reduced form $L'$ is alternating since
  the tangles $\widehat{t_i}$ are
  positive for all $i=1,\ldots, p$.  If $\displaystyle{\varepsilon \leq -p}$,
  then applying a positive flype to each of the $p$ tangles of $L'$ as
  in Lemma \ref{lem:flype}, yields an alternating diagram with all
  negative tangles. This proves the first case.

  For the second case, suppose
  $t_i=\alpha_i/\beta_i$, where $\alpha_i>0$
  for all $i=1,\dots,p$.  Then $L$ is equivalent to $L'
    = M\left(\varepsilon; \alpha_1/(\beta_1\mod{\alpha_1}),
      \dots,\alpha_p/(\beta_p\mod{\alpha_p})\right)$.  
Since $\left|\varepsilon +
      \frac{p}{2}\right|<\frac{p}{2}-1$, $ -p+1 < \varepsilon < -1$, 
hence $1 < |\varepsilon| < p-1$.
Applying
  a positive flype to each of the last $m=|\varepsilon|$ tangles
  of $L'$ results in an equivalent link
  $\displaystyle{L''=M\left(0;r_1,\dots,r_n,s_1,\dots,s_m\right)}$,
  where $n=p-m$,
  $\displaystyle{r_i=\frac{\alpha_i}{\beta_i\mod{\alpha_i}}>0}$ for
  $i=1,\dots,n$, and
  $\displaystyle{s_j=\frac{\alpha_j}{(\beta_j\mod{\alpha_j})-\alpha_j}<0}$
  for $j=1, \dots, m$.   Hence
$L''$ has at least two positive tangles and at least two negative
  tangles.  It follows that the reduced form for $L''$ is
  non-alternating and adequate.
\end{proof}

For a rational tangle $t=a_1\ldots a_m$ as above, let
$\overline{t}=\overline{a_1}\overline{a_2}\ldots\overline{a_m}$ 
denote its reflection.

\begin{lemma} 
\label{lem:ref-sym}
 Let $L=M(e;t_1,\ldots,t_p)$ 
be a Montesinos link and $L^r=M(-e;\overline{t_1},\ldots,\overline{t_p})$ denote its reflection. Then $\varepsilon(L^r)=-\varepsilon(L)-p$. 
\end{lemma}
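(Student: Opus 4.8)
The plan is to compute $\varepsilon(L^r)$ directly from the displayed presentation $M(-e;\overline{t_1},\ldots,\overline{t_p})$ and compare it termwise with $\varepsilon(L)$. First I would record the effect of reflection on the tangle fractions: if $t_i$ corresponds to $\alpha_i/\beta_i$, then its reflection $\overline{t_i}=\overline{a_1}\cdots\overline{a_m}$ corresponds to the negated continued fraction, i.e.\ to $-\alpha_i/\beta_i$, so that $1/\overline{t_i}=-\beta_i/\alpha_i=-(1/t_i)$. Writing $-\alpha_i/\beta_i$ as $(-\alpha_i)/\beta_i$ shows it is again in lowest terms with positive denominator and still different from $0,\pm1$, so $M(-e;\overline{t_1},\ldots,\overline{t_p})$ is a legitimate Montesinos presentation, and $\varepsilon(L^r)$ is obtained by evaluating the defining formula for $\varepsilon$ on it:
\[
\varepsilon(L^r)=-e+\sum_{i=1}^{p}\left\lfloor\frac{1}{\overline{t_i}}\right\rfloor=-e+\sum_{i=1}^{p}\left\lfloor-\frac{\beta_i}{\alpha_i}\right\rfloor,\qquad \varepsilon(L)=e+\sum_{i=1}^{p}\left\lfloor\frac{\beta_i}{\alpha_i}\right\rfloor .
\]

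Adding these two expressions, the parameters $\pm e$ cancel, and the claim $\varepsilon(L^r)=-\varepsilon(L)-p$ reduces to the purely arithmetic identity $\sum_{i=1}^{p}\bigl(\lfloor\beta_i/\alpha_i\rfloor+\lfloor-\beta_i/\alpha_i\rfloor\bigr)=-p$. The key step is the elementary floor identity $\lfloor-x\rfloor=-\lfloor x\rfloor-1$, valid for every non-integer real number $x$, which forces each summand $\lfloor\beta_i/\alpha_i\rfloor+\lfloor-\beta_i/\alpha_i\rfloor$ to equal $-1$. Applying it with $x=\beta_i/\alpha_i=1/t_i$ and summing over the $p$ tangles yields exactly $-p$, which finishes the computation.

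The one point requiring care --- and the main (if minor) obstacle --- is to verify that each $1/t_i=\beta_i/\alpha_i$ is genuinely non-integral, since the floor identity fails exactly when $\beta_i/\alpha_i\in\Z$ (there the two floors sum to $0$ rather than $-1$). This non-integrality is built into the setup: $\beta_i/\alpha_i\in\Z$ would force $\alpha_i=\pm1$ by coprimality, i.e.\ $t_i=\pm1/\beta_i$, so that $\{1/t_i\}=0$ and the reduced tangle $\widehat{t_i}=1/\{1/t_i\}$ underlying Proposition \ref{lem:mont-reduce} would be undefined; equivalently the tangle $t_i0$ would be integral and absorbable into $e$, so $L$ would not be a genuine length-$p$ Montesinos link. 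Thus under the standing assumption that each $\widehat{t_i}$ is defined (equivalently $|\alpha_i|\ge 2$) the identity applies to every term, and the computation above gives $\varepsilon(L^r)=-\varepsilon(L)-p$ as claimed.
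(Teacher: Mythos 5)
Your proof is correct and is essentially the paper's own argument: both reduce the claim to the fact that reflection negates the tangle fraction, $\overline{t_i}=-t_i$, and then apply the floor identity $\lfloor -x\rfloor=-\lfloor x\rfloor-1$ with $x=1/t_i$ termwise, so the $p$ summands each contribute $-1$. Your additional check that each $1/t_i$ is genuinely non-integral (equivalently $|\alpha_i|\geq 2$, so that $\widehat{t_i}$ is defined) is a hypothesis the paper's proof uses silently, so this is a welcome tightening rather than a different route.
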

\begin{proof} The continued fraction expansion of $t$ implies that 
the reflection of $t$, $\overline{t}=-t$. It follows that 
$\bfloor{1/\, \overline{t}}=\bfloor{1/-t}=-\bfloor{1/t}-1$.
Hence 
$$\varepsilon(L^r)= -e+\sum_{i=1}^p \bfloor{1/\, \overline{t_i}} = -e+\sum_{i=1}^p \left( -\bfloor{1/t_i}-1 \right) 
=\left(-e-\sum_{i=1}^p \bfloor{1/t_i}\right) -p = -\varepsilon(L)-p. \qedhere$$
\end{proof}

\section{Determinant of Montesinos links} 

The determinant of rational and Montesinos links follows directly from
Conway's \emph{determinant fraction} identities for tangle sum
$t_{a+b}$ and product $t_{ab}$ given in \cite{Conway}:
\begin{equation*}
\label{det-sum}
\frac{det(1* t_{a+b})}{det(1* t_{a+b}0)}=\frac{det(1* t_{a})}{det(1* t_{a}0)}+\frac{det(1* t_{b})}{det(1* t_{b}0)} \ \mathrm{and} \
\frac{det(1* t_{ab})}{det(1* t_{ab}0)}=\frac{det(1* t_{a}0)}{det(1* t_{a})}+\frac{det(1* t_{b})}{det(1* t_{b}0)}.
\end{equation*}
where $det(K)$ is Conway's determinant. The usual deteminant $\det(K)
= |det(K)|$ (Section 7 in \cite{Conway}). We derive the formula for 
the determinant of Montesinos links (see also \cite{Asaeda}).

\begin{prop}
  $\displaystyle{\det\left(M\left(e;\frac{\alpha_1}{\beta_1},\ldots,\frac{\alpha_p}{\beta_p}\right)\right)=
\left|    \left( \prod_{i=1}^p \alpha_i \right) \left(e+\sum_{i=1}^p
      \frac{\beta_i}{\alpha_i} \right) \right |}$.
\end{prop}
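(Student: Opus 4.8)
The plan is to build the formula inductively from Conway's determinant fraction identities, exploiting the recursive structure of the Montesinos link as iterated tangle sums. I will track the pair of Conway determinants $\bigl(det(1*t),\,det(1*t0)\bigr)$ for the relevant tangles, since the identities naturally propagate this pair rather than a single determinant. The key observation is that for a single rational tangle $t_i=\alpha_i/\beta_i$ (built by tangle products of integral tangles), the closures satisfy $det(1*t_i0)=\pm\alpha_i$ and $det(1*t_i)=\pm\beta_i$, matching the fractional correspondence $det(1*t_i)/det(1*t_i0)=\beta_i/\alpha_i=1/t_i$. This should follow by induction on the length of the continued fraction, using the product identity together with the fact that multiplying by the zero tangle inverts the fraction.

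First I would establish the claim for the building block $s_i:=t_i0$, computing that $det(1*s_i)/det(1*s_i0)=\alpha_i/\beta_i$, so that up to sign $det(1*s_i)=\beta_i$... let me restate in the form I actually want: up to sign $det(1*s_i0)=det(1*t_i)=\beta_i$ and $det(1*s_i)=det(1*t_i0)=\alpha_i$. Next I would assemble the full link $M(e;t_1,\dots,t_p)=1*(e+s_1+\dots+s_p)$ by repeatedly applying the sum identity. Writing $T_k=e+s_1+\dots+s_k$ and inducting on $k$, the sum identity gives
\begin{equation*}
\frac{det(1*T_k)}{det(1*T_k0)}=\frac{det(1*T_{k-1})}{det(1*T_{k-1}0)}+\frac{det(1*s_k)}{det(1*s_k0)}.
\end{equation*}
The base case is the integer tangle $e$, whose vertical closure is an unknot/unlink so that the ratio is $e$ (with $det(1*e0)$ normalized to $1$). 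At each step the denominator $det(1*T_k0)$ accumulates a factor of $\alpha_k$, since adding a tangle whose closure data is $(\alpha_k,\beta_k)$ forces the common denominator to pick up $\alpha_k$; meanwhile the numerator ratio adds $\beta_k/\alpha_k$. Carrying this through $p$ steps yields $det(1*T_p0)=\pm\prod_i\alpha_i$ and $det(1*T_p)=\pm\bigl(\prod_i\alpha_i\bigr)\bigl(e+\sum_i\beta_i/\alpha_i\bigr)$, and taking absolute values gives the stated $\det$.

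The main obstacle I anticipate is bookkeeping the denominators consistently: the Conway identities are stated as equalities of fractions, not of the individual (unnormalized) determinants, so I must argue that when I clear denominators the \emph{common} denominator is exactly $\prod_i\alpha_i$ and not a proper divisor of it. This requires knowing that successive $\alpha_i$ contribute independently---intuitively because each rational tangle is in lowest terms with $\gcd(\alpha_i,\beta_i)=1$---and that no spurious cancellation occurs when the fractions are added. I would handle this by choosing, at each inductive step, the normalization $det(1*T_k0)=\prod_{i\le k}\alpha_i$ and verifying that the sum identity is then consistent with $det(1*T_k)=\bigl(\prod_{i\le k}\alpha_i\bigr)\bigl(e+\sum_{i\le k}\beta_i/\alpha_i\bigr)$, which is an integer because the $\alpha_j$ for $j\neq i$ clear each denominator. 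Since Conway's determinants are well-defined only up to an overall sign, passing to $\det=|det|$ at the very end absorbs all the sign ambiguities, so tracking signs during the induction is unnecessary.
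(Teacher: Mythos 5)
Your overall route is the same as the paper's --- compute the pair $\bigl(det(1*T),det(1*T0)\bigr)$ for $T=e+t_10+\cdots+t_p0$ from Conway's fraction identities --- but the step you yourself flag as the ``main obstacle'' is a genuine gap, and your proposed fix does not close it. Conway's $det(1*T_k)$ and $det(1*T_k0)$ are determinants of actual links, well defined up to sign; you cannot ``choose the normalization $det(1*T_k0)=\prod_{i\le k}\alpha_i$'' and then merely check consistency with the sum identity, because that identity constrains only the ratio, and \emph{any} common rescaling of the pair is consistent with it. Worse, your hope that ``no spurious cancellation occurs when the fractions are added'' is simply false: for $M(0;2,2)$ the ratio is $0+\frac12+\frac12=1$, i.e.\ $1/1$ in lowest terms, while the true pair is $det(1*T)=det(1*T0)=4$ (the link is the $(2,4)$ torus link, and $1*T0$ is a connected sum of two Hopf links). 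So lowest-terms reasoning cannot recover the denominator, and your inductive claim that ``the common denominator picks up $\alpha_k$'' is exactly the unproved crux. The paper pins it down with a geometric observation your proposal is missing: $1*(e+t_10+\cdots+t_p0)0 = 1*t_1 \ \# \cdots \# \ 1*t_p$, so multiplicativity of the determinant under connected sum gives $det(1*T_p0)=\prod_{i=1}^p\alpha_i$ outright; with that one line, the sum identity finishes the proof exactly as you outline. Your induction becomes rigorous only if at each step you establish $1*T_k0=1*t_1\#\cdots\#1*t_k$ (or some equivalent computation of the denominator), rather than asserting the accumulation of $\alpha_k$.

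A secondary but real error: your key lemma is stated with $\alpha$ and $\beta$ swapped. With the paper's conventions ($1*n$ is the $(2,n)$ torus link with determinant $n$, while $1*n0$ is an unknot), one has $det(1*t)=\alpha$ and $det(1*t0)=\beta$, i.e.\ $det(1*t)/det(1*t0)=t$, not $1/t$ as you write. Your later bookkeeping --- the ratio gaining $\beta_k/\alpha_k$ per tangle $s_k=t_k0$ and the denominator gaining $\alpha_k$ --- silently uses the correct orientation, contradicting the lemma as you stated (and restated) it. That slip is cosmetic and fixable, but as written the proposal is internally inconsistent on this point in addition to the substantive gap above.
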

\begin{proof}
  Let $t=\alpha/\beta$ be the rational tangle $a_1 a_2 \dots a_m$, as above. Then it follows by induction on $m$ and by the determinant fraction identity for the product that
$det(1*t)=\alpha$ and $det(1*t0)=\beta$. 

  Let
$\displaystyle{t_i=\frac{\alpha_i}{\beta_i}}$, so
$\displaystyle{\frac{det(1*t_i)}{det(1*t_i0)}=\frac{\alpha_i}{\beta_i}}$.
Since $1*(e+t_10+\ldots+t_p0)0= 1*t_1 \ \# \ldots \# \ 1*t_p$, 
$det(1*(e+t_10+\ldots+t_p0)0)=det(1*t_1) \times \ldots \times det(1*t_p) 
= \prod_{i=1}^p \alpha_i$. Using the determinant fraction identity for 
the sum we get 
\begin{align*}
\frac{det(1*(e+t_10+\ldots+t_p0))}{det(1*(e+t_10+\ldots+t_p0)0)}&=
\frac{det(1*e)}{det(1*e0)}+\sum_{i=1}^p \frac{det(1*t_i0)}{det(1*t_i00)},\\
\det(1*(e+t_10+\ldots+t_p0))&= \left| \left( \prod_{i=1}^p \alpha_i \right)  
\left(  e + \sum_{i=1}^p \frac{\beta_i}{\alpha_i} \right)\right|. \qedhere
\end{align*}
\end{proof}

\section{Quasi-alternating Montesinos links}

 \begin{prop} Let $s, r_i\neq 1$ be positive rational numbers for
  $i=1,\ldots, n$. Then the Montesinos link 
$M(0;r_1, \ldots, r_n, -s)$ is quasi-alternating if
 $s > \mathrm{min}\{r_1,\ldots, r_n\}$. The statement is true 
for any position of the tangle $-s$. 
\label{thm:e=0}
  \end{prop}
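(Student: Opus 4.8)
The plan is to argue by strong induction on the total crossing number of the diagram (equivalently, on $\sum_i \alpha_i + \gamma$, where $r_i=\alpha_i/\beta_i$ and $s=\gamma/\delta$ in lowest terms), exhibiting at each stage a crossing whose two smoothings are quasi-alternating Montesinos links of the same shape together with the required determinant identity. Relabeling if necessary, I may assume $r_1=\min\{r_1,\dots,r_n\}$; the assertion that the position of $-s$ is immaterial follows from the classification of Montesinos links up to cyclic permutation and reversal of the fractions (cited after Proposition~\ref{prop:2tangle}), so I fix $-s$ in the last slot. The invariant $\varepsilon=0$ will be preserved by all the smoothings I use.

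For the base case $n=1$, the link $M(0;r_1,-s)$ has only two tangles, so by Proposition~\ref{prop:2tangle} it is a rational link, hence a non-split alternating link, and therefore quasi-alternating. Moreover the determinant formula of the preceding section gives $\det M(0;r_1,-s)=\alpha_1\gamma\,\lvert 1/r_1-1/s\rvert$, which is nonzero precisely because $s>r_1$; this nonvanishing is exactly what the inductive step will consume from its smaller links.

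For the inductive step $n\ge 2$, the key idea is to smooth a crossing in a \emph{non-minimal} positive tangle, that is, to choose an index $j$ with $\min_{i\neq j}r_i=\min_i r_i$ (such $j$ exists once $n\ge 2$). Smoothing a suitable crossing of the rational tangle $r_j$ produces two links: one, $L_0$, in which $r_j$ is replaced by a tangle of strictly smaller complexity but the link is still of the form $M(0;\dots,-s)$; and one, $L_\infty$, in which the $j$-th tangle is simplified (in the integral case, deleted), giving a Montesinos link with one fewer tangle, or a rational link when $n=2$. Because $j$ is non-minimal, \emph{both} $L_0$ and $L_\infty$ retain a tangle realizing $\min_i r_i$, so the hypothesis $s>\min$ persists and each is quasi-alternating by the inductive hypothesis (or by the base case). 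The determinant condition then falls out of the determinant formula: retaining the minimal tangle forces the three quantities $\sum 1/r_i - 1/s$ associated with $L$, $L_0$, and $L_\infty$ all to be strictly positive, since each majorizes $1/r_1-1/s>0$; hence the absolute values can be dropped simultaneously, the additivity $\det L=\det L_0+\det L_\infty$ reduces to the (multi)linear identity underlying Conway's determinant-fraction formulas, and $\det L_0,\det L_\infty\neq 0$ by the same positivity.

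The main obstacle is the bookkeeping for a general \emph{rational} rather than integral tangle $r_j$. I must pin down precisely which two tangles arise from smoothing the chosen crossing in the continued-fraction expansion $r_j=[a_m,\dots,a_1]$, confirm that the complexity genuinely drops in $L_0$, and verify the determinant additivity in that generality from the tangle-sum and tangle-product determinant-fraction identities; alternatively, one can organize this reduction through the Champanerkar–Kofman extension theorem \cite{Champanerkar}, which lets one build the rational tangles from integral ones once the pretzel case is settled. Interwoven with this are the degenerate cases—when a reduced tangle value crosses $1$, becomes an integer, or collapses so that a crossing flypes into the $e$-parameter—which must be dispatched using Proposition~\ref{lem:mont-reduce} and the flype moves of Lemma~\ref{lem:flype}. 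I expect the sign-alignment principle, namely that keeping a minimal tangle in both smoothings forces all the relevant sums positive, to be the conceptual crux that simultaneously propagates the quasi-alternating hypothesis and secures the determinant equality.
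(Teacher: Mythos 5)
Your strategy is sound and, modulo the bookkeeping you yourself flag, it can be completed; but it is genuinely different from the paper's route. You induct on crossing number and smooth a crossing \emph{inside} an existing non-minimal tangle $r_j$. The paper instead inducts on the number of tangles $n$ and never touches the tangles $r_i$ at all: it inserts a fresh single-crossing tangle $1$ next to $-s$, forming $L=M(0;r_1,\dots,r_n,1,-s)$, so that at that crossing $L_\infty=M(0;r_1,\dots,r_n,-s)$ is quasi-alternating by the inductive hypothesis, while $L_0=1*\overline{r_1}\#\cdots\#1*\overline{r_n}\#1*s$ is a connected sum of rational links, quasi-alternating by Lemma~2.3 of \cite{Champanerkar}; determinant additivity is then a one-line consequence of the Montesinos determinant formula together with the positivity $\sum_i 1/r_i-1/s>0$, and finally Theorem~2.1 of \cite{Champanerkar} converts the quasi-alternating crossing into an \emph{arbitrary} rational tangle $r_{n+1}$, adding a whole tangle in one stroke. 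That black-box use of the rational-extension theorem is precisely what spares the paper your ``main obstacle'': your plan in effect re-proves that theorem by hand inside the Montesinos setting. (Your parenthetical suggestion to route through Champanerkar--Kofman is essentially the paper's actual proof, except no prior pretzel case is needed; the extension theorem is applied directly at each inductive step.) Your sign-alignment principle is correct and is the same positivity the paper exploits.

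Two concrete points you would have to repair in your version. First, for non-integral $r_j$ the $\infty$-type smoothing does not simply delete or simplify the $j$-th tangle: it closes off the subtangle, so $L_\infty$ is a \emph{connected sum} of a rational link with the smaller Montesinos link. Your inductive hypothesis alone does not cover this; you need quasi-alternating links to be closed under connected sum, i.e.\ Lemma~2.3 of \cite{Champanerkar} --- the same lemma the paper invokes for its $L_0$. The determinant bookkeeping does then work out: writing $r_j=a_m+P/Q$ in lowest terms, the $0$-smoothing replaces $r_j$ by $r_j-1$ and the split-off rational factor has determinant $Q=\beta_j$, so with $s=\gamma/\delta$ one gets $\det L-\det L_0=\gamma\bigl(\prod_{i\neq j}\alpha_i\bigr)\beta_j\bigl(\sum_{i\neq j}\beta_i/\alpha_i-\delta/\gamma\bigr)=\det L_\infty$, where the bracket is positive exactly because both smoothings retain the minimal tangle, as you say (e.g.\ for $M(0;2,5/2,-3)$, smoothing in $5/2$ gives $17=15+2$). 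Second, your degenerate cases are real but routine: if $r_j-1=1$ the crossing flypes into the $e$-parameter, giving $\varepsilon\geq 0$, and Proposition~\ref{prop:adq-alt} yields an alternating diagram; parameters in $(0,1)$ are handled by the reduction of Proposition~\ref{lem:mont-reduce}. None of this arises in the paper's argument, which is why its induction on the number of tangles is the cheaper organization, whereas yours buys a self-contained proof that does not lean on the rational-extension theorem.
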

\begin{proof}
  We will prove the statement by induction on $n$. For $n=1$, Proposition \ref{prop:2tangle} implies that $M(0;r,-s)$ is a rational link, which is quasi-alternating when $r \neq s$ (for $r=s$, this gives the unlink on two components which is not quasi-alternating). This proves the base case.

Let $s > \mathrm{min}\{r_1,\ldots,r_n\}$. By the induction hypothesis
$M(0;r_1,\ldots, r_n, -s)$ is quasi-alternating. Let $L$ be the
diagram $M(0;r_1, \ldots, r_n, 1,-s)$ and let $c$ be the single
crossing to the left of the $-s$ tangle. $L_{\infty}=M(0;r_1,\ldots,
r_n, -s)$.  $L_0$ naturally splits as a connect sum of horizontal
closure of tangles of the type $t0$, for a rational tangle $t$. 
Since the horizontal closure of $t0$ is isotopic
to $1*\overline{t}$, we have $L_0=1*\overline{r_1} \# 1*\overline{r_2}\# \ldots \# 1*\overline{r_n} \# 1*s$. See Figure \ref{fig:connectsum}.
\begin{figure}[htbp]
\begin{center}
\includegraphics[height=2in]{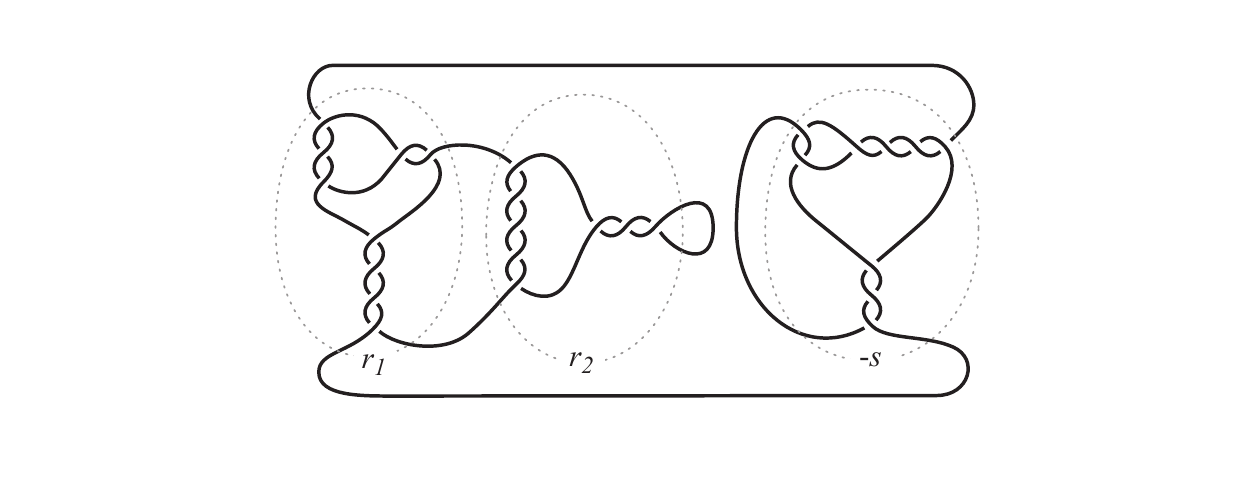}
\caption{The link $L_0$ is a connect sum of horizontal closures of rational tangles.}
\label{fig:connectsum}
\end{center}
\end{figure}
Let $r_i = \alpha_i/\beta_i$ and $s=\alpha/\beta$,
where all the $\alpha$'s and $\beta$'s are positive integers.  By
the formula for the determinant of rational and Montesinos links
\begin{align*}
\mathrm{det}(L_0)= \mathrm{det}(1*s) \left (\prod_{i=1}^n \mathrm{det}(1*\overline{r_i}) \right ) = \left|\alpha \prod_{i=1}^n \alpha_i\ \right| ,\  \  
\mathrm{det}(L_{\infty})= \left| \alpha \prod_{i=1}^n \alpha_i \left (
\sum_{i=1}^n \frac{\beta_i}{\alpha_i}-\frac{\beta}{\alpha} \right ) 
\right |,
\end{align*}
and $\det(L_\infty)\neq 0$ because $L_\infty = M(0;r_1,\dots,r_n,-s)$ is assumed to be quasi-alternating. 

Since $s > \mathrm{min}\{r_1,\ldots,r_n\}$, we have
$\displaystyle{ \left( \sum_{i=1}^n \frac{1}{r_i}-\frac{1}{s} \right ) 
= \left( \sum_{i=1}^n \frac{\beta_i}{\alpha_i}-\frac{\beta}{\alpha} \right )
>0}$. 
Hence, 
\begin{align*}
\mathrm{det}(L_0)+\mathrm{det}(L_{\infty})
= \alpha \prod_{i=1}^n \alpha_i \left (1+
\sum_{i=1}^n \frac{\beta_i}{\alpha_i}-\frac{\beta}{\alpha}  \right )
= \mathrm{det}(L).
\end{align*}

$L_0$ is quasi-alternating by Lemma 2.3 in \cite{Champanerkar} and
$L_{\infty}$ is quasi-alternating by the induction hypothesis, hence $L$
is quasi-alternating at the crossing $c$. Using Theorem 2.1 in
\cite{Champanerkar}, we can extend $c$ to a rational tangle. This shows that if $s >
\mathrm{min}\{r_1,\ldots, r_{n+1}\}$ then $M(0;r_1,\ldots, r_{n+1},
-s)$ is quasi-alternating. Since we did not use the position of the
tangle $-s$ in the argument, the same argument works for any position
of the tangle $-s$.
\end{proof}

\begin{remark} 
Unlike the statement of Proposition 2.2 in \cite{Greene}, the condition $s>\min\{r_1,\dots,r_n\}$ appearing above is not a necessary condition. 
For example, 
$$ M(0;2,7,-4)= M(-1;2,7,4/3)=M(0;2,-7/6,4/3).$$
The leftmost diagram satisfies the condition of Proposition \ref{thm:e=0}, 
and, hence, it is quasi-alternating. However, the rightmost diagram 
fails to satisfy the above condition. 
\end{remark}

We will use Proposition \ref{thm:e=0} to prove a sufficient condition 
for any Montesinos link to be quasi-alternating, in terms of the invariant 
$\varepsilon$ defined above. 
Recall that for 
$0\neq t=\displaystyle{\alpha/\beta} \in \Q$, 
$$\widehat{t}=\frac{1}{\{ \frac{1}{t} \} },\quad     
t^f=\frac{\alpha}{\beta- \alpha} \mathrm{\ if\ } t >0,\quad  
t^f=\frac{\alpha}{\beta + \alpha} \mathrm{\ if\ } t <0,\quad   
\varepsilon= e + \sum_{i=1}^p  \left\lfloor\frac{1}{t_i}\right\rfloor.  $$

\begin{theorem} 
\label{thm:qam}
Let $L=M(e;t_1, \ldots, t_p)$ be a Montesinos link.
Then $L$ is quasi-alternating if 
  \begin{enumerate}
  \item $\varepsilon > -1$, or
\item $\varepsilon=-1$ and $|\widehat{t_{i}}^f| > \widehat{t_j}$ for some $i \neq j$, or 
\item $\varepsilon < 1-p$, or 
\item $\varepsilon=1-p$ and $|\widehat{t_{i}}^f| < \widehat{t_j}$ for some $i \neq j$. 
  \end{enumerate}
\end{theorem}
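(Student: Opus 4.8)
The plan is to reduce $L$ to its reduced form $L'=M(\varepsilon;\widehat{t_1},\dots,\widehat{t_p})$ by Proposition~\ref{lem:mont-reduce}, so that every tangle satisfies $\widehat{t_i}>1$, and then to handle the four cases by noting that (3) and (4) are the reflections of (1) and (2). The key organizing observation is that membership in $\mathcal{Q}$ is unchanged under mirror image: the determinant is a reflection invariant and the two smoothings of a crossing pass to the two smoothings of the mirror crossing, so an easy induction on the quasi-alternating structure shows $L\in\mathcal{Q}\iff L^r\in\mathcal{Q}$. Combined with Lemma~\ref{lem:ref-sym}, which gives $\varepsilon(L^r)=-\varepsilon(L)-p$, this sends the hypothesis $\varepsilon<1-p$ to $\varepsilon(L^r)\geq 0$ and the hypothesis $\varepsilon=1-p$ to $\varepsilon(L^r)=-1$. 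Hence it suffices to prove (1) and (2) and to check that the inequality in (4) matches the one in (2) after reflection. Throughout I may assume $p\geq 3$, since for $p<3$ Proposition~\ref{prop:2tangle} makes $L$ a rational link, for which the conclusion is immediate from its two-bridge alternating structure.

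For case (1) note that $\varepsilon\geq 0$ (as $\varepsilon\in\Z$), so $|\varepsilon+\tfrac{p}{2}|>\tfrac{p}{2}-1$ and Proposition~\ref{prop:adq-alt}(1) gives $L$ an alternating diagram; because all $\widehat{t_i}>1$ this diagram is connected and reduced, so $L$ is a non-split alternating link and therefore lies in $\mathcal{Q}$ by the remark recorded in the introduction.

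Case (2) is the heart of the argument, and I would reduce it to Proposition~\ref{thm:e=0}. Fix an index $i$ realizing the hypothesis $|\widehat{t_i}^f|>\widehat{t_j}$ for some $j\neq i$, and apply a positive flype (Lemma~\ref{lem:flype}(1)) to the positive tangle $\widehat{t_i}$ of $M(-1;\widehat{t_1},\dots,\widehat{t_p})$. This produces $M(0;\widehat{t_1},\dots,\widehat{t_{i-1}},\widehat{t_i}^f,\widehat{t_{i+1}},\dots,\widehat{t_p})$, in which $\widehat{t_i}^f=\alpha_i/(\beta_i-\alpha_i)<0$ is the unique negative tangle (writing $\widehat{t_i}=\alpha_i/\beta_i$ with $\alpha_i>\beta_i>0$) while the remaining $p-1\geq 2$ tangles are positive and greater than $1$. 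This is exactly the configuration $M(0;r_1,\dots,r_{p-1},-s)$ of Proposition~\ref{thm:e=0} with $s=|\widehat{t_i}^f|$, and the hypothesis gives $s=|\widehat{t_i}^f|>\widehat{t_j}\geq\min_{k\neq i}\widehat{t_k}$, so $s>\min\{r_1,\dots,r_{p-1}\}$. Since $s\neq 1$ and each $\widehat{t_k}\neq 1$, Proposition~\ref{thm:e=0} applies and $L\in\mathcal{Q}$.

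Finally, I would obtain (3) and (4) from the reduction above applied to $L^r$. The one delicate point, and the step I expect to be the main obstacle, is verifying that the inequalities transform correctly under reflection. Writing $1/t_i=q+f$ with $f=\{1/t_i\}\in(0,1)$, a direct computation yields $\widehat{t_i}=1/f$ and $|\widehat{t_i}^f|=\widehat{t_i}/(\widehat{t_i}-1)=1/(1-f)$; since $\overline{t_i}=-t_i$ forces $\{1/\overline{t_i}\}=1-f$, one gets the two identities $\widehat{\overline{t_i}}=1/(1-f)=|\widehat{t_i}^f|$ and $|\widehat{\overline{t_i}}^f|=\widehat{t_i}$. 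Consequently the case-(2) inequality for $L^r$, namely $|\widehat{\overline{t_i}}^f|>\widehat{\overline{t_j}}$, reads $\widehat{t_i}>|\widehat{t_j}^f|$, which upon swapping $i$ and $j$ is precisely the inequality $|\widehat{t_i}^f|<\widehat{t_j}$ of case (4). Thus (4) for $L$ is equivalent to (2) for $L^r$ and (3) for $L$ reduces to (1) for $L^r$; reflection invariance of $\mathcal{Q}$ then finishes the proof. I anticipate the sign- and index-bookkeeping of this reflection step, rather than any conceptual difficulty, to be where care is most needed.
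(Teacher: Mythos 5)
Your proposal is correct and takes essentially the same route as the paper's own proof: reduce to $M(\varepsilon;\widehat{t_1},\dots,\widehat{t_p})$, obtain case (1) from the alternating diagram of Proposition~\ref{prop:adq-alt}, obtain case (2) by a positive flype into the configuration $M(0;r_1,\dots,r_{p-1},-s)$ of Proposition~\ref{thm:e=0}, and deduce (3) and (4) by reflection via Lemma~\ref{lem:ref-sym}, with your explicit identities $\widehat{\overline{t}}=|\widehat{t}^f|$ and $|\widehat{\overline{t}}^f|=\widehat{t}$ merely making precise what the paper leaves implicit (they appear later as Equation~(\ref{eqn:tbarf}) in the proof of Theorem~\ref{thm:nqam}). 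One minor caution: your aside that the $p<3$ case is ``immediate'' because the link is rational overstates matters---the two-component unlink is a rational link that is not quasi-alternating---but this is harmless here since, as your own computation shows, that link fails the hypotheses, and your main argument in fact works uniformly without the $p\geq 3$ restriction.
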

\begin{proof} 
  Since reflections of quasi-alternating links are quasi-alternating,
  it is enough to consider $L$ or its reflection.  By the symmetry of
  $\varepsilon$ under reflections proved in Lemma \ref{lem:ref-sym}, it
  suffices to consider the case when $\varepsilon \geq -p/2$. Cases (3) 
and (4) follow from cases (1) and (2) respectively.  

If $\varepsilon > -1$ then, by Proposition \ref{prop:adq-alt}, $L$ has an 
alternating diagram, and hence it is quasi-alternating. 

If $\varepsilon=-1$ then $L=M(-1;\widehat{t_1},\ldots,\widehat{t_p})$. The
condition $|\widehat{t_{i}}^f| > \widehat{t_{j}}$ for some $i \neq
j$ implies that we can use a positive flype on the tangle
$\widehat{t_{i}}$ to convert $L$ to an equivalent link which satisfies the
condition in Proposition \ref{thm:e=0}.
\end{proof}
\subsection{Examples}
\begin{enumerate}
\item $M(3;\frac{31}{7},\frac{5}{16},\frac{-29}{9})$,
  $\varepsilon = 3 + \lfloor\frac{7}{31}\rfloor +
    \lfloor\frac{16}{5}\rfloor +
    \lfloor\frac{9}{-29}\rfloor = 3 + 0 + 3 -1 = 5 > -1$
  which is quasi-alternating by case 1 of Theorem \ref{thm:qam}. \\

\item $M(-1; \frac{3}{2}, \frac{4}{3}, \frac{7}{4})$, which is in reduced form; i.e., $\widehat{t_i}=t_i$.
  $|\widehat{t_1}^f|=\frac{3}{1}$,
  $|\widehat{t_2}^f|=\frac{4}{1}$,
  $|\widehat{t_3}^f|=\frac{7}{3}$.
Since $|\widehat{t_1}^f|>\widehat{t_2}$, this link is
quasi-alternating by case 2 of Theorem \ref{thm:qam}.
In particular, $M(-1; \frac{3}{2}, \frac{4}{3}, \frac{7}{4}) = M(0;\frac{-3}{1},\frac{4}{3}, \frac{7}{4})$, by applying a positive flype 
to the first tangle. The resulting link is quasi-alternating 
by Proposition \ref{thm:e=0}.
\end{enumerate}

\section{Non-quasi-alternating Montesinos links}

\begin{theorem} 
Let $L=M(e;t_1, \ldots, t_p)$ be a Montesinos link with $p \geq 3$.
Then $L$ is non-quasi-alternating if 
  \begin{enumerate}
  \item $1-p < \varepsilon < -1$, or 
\item $\varepsilon=-1$ and $\widehat{t_i}>2$ for all $i=1,\dots, p$, or 
\item $\varepsilon=1-p$ and $|\widehat{t_i}^f|>2$ for all $i=1,\dots, p$.
  \end{enumerate}
\label{thm:nqam}
\end{theorem}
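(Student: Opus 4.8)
The plan is to prove the contrapositive via the double branched cover. Since the introduction records that the double branched cover of a quasi-alternating link is a Heegaard Floer L-space, it suffices to show that under each of conditions (1)--(3) the double branched cover $\Sigma_2(L)$ is \emph{not} an L-space. I would first recall that $\Sigma_2(L)$ is a Seifert fibered space over $S^2$. Passing to the reduced form $M(\varepsilon;\widehat{t_1},\ldots,\widehat{t_p})$ by Proposition \ref{lem:mont-reduce}, the normalized Seifert data are the fiber invariants $\{1/t_i\}=1/\widehat{t_i}\in(0,1)$ together with the integer $\varepsilon$, and the orbifold Euler number is (up to sign) $\varepsilon+\sum_{i=1}^p 1/\widehat{t_i}$; since $p\geq 3$ there are at least three cone points. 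By the determinant formula one has $\det(L)=|(\prod_i\alpha_i)(\varepsilon+\sum_i 1/\widehat{t_i})|$, so if this vanishes then $L$ is automatically non-quasi-alternating (quasi-alternating links have nonzero determinant) and there is nothing to prove. Hence I may assume $\Sigma_2(L)$ is a rational homology sphere.

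Next I would invoke the two external inputs promised in the introduction. The theorem of Boyer--Gordon--Watson identifies, for a Seifert fibered rational homology sphere, the L-space property with non-left-orderability of $\pi_1$; equivalently, for these spaces, a non-L-space is detected by the existence of a horizontal (taut) foliation. Whether a Seifert fibered space over $S^2$ admits a horizontal foliation is governed by the realizability criterion of Eisenbud--Hirsch--Neumann and Jankins--Neumann (completed by Naimi): such a foliation exists exactly when the Euler number and the fiber invariants admit a suitable realizing fraction $a/m$. Thus the whole problem reduces to checking that conditions (1)--(3) place the Seifert data of $\Sigma_2(L)$ inside this realizable region.

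For the bookkeeping I would first exploit reflection symmetry. By Lemma \ref{lem:ref-sym} the mirror $L^r$ satisfies $\varepsilon(L^r)=-\varepsilon(L)-p$, with reduced fiber data governed by the $\widehat{t_i}^{\,f}$; since mirroring only reverses the orientation of $\Sigma_2(L)$, and neither the L-space property nor left-orderability is sensitive to orientation, case (3) follows from case (2) just as cases (3),(4) of Theorem \ref{thm:qam} follow from (1),(2). The genuine content is therefore cases (1) and (2). Condition (1), namely $1-p<\varepsilon<-1$, is precisely the open range in which the Euler number lies strictly interior to the realizable interval, so a horizontal foliation exists for any admissible fiber invariants; this is the same range in which Proposition \ref{prop:adq-alt} already produces a non-alternating, adequate diagram. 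Condition (2), $\varepsilon=-1$, sits on the boundary of that range, and the extra hypothesis $\widehat{t_i}>2$, i.e. $1/\widehat{t_i}<\tfrac12$ for every $i$, is exactly what is needed to realize the boundary fraction $a/m=\tfrac12$ simultaneously at all $p$ cone points.

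The hard part will be the boundary case (2) (and hence, via reflection, case (3)). Away from the boundary, realizability is immediate from the interior position of the Euler number; on the boundary one must verify the Jankins--Neumann inequalities exactly and confirm that the uniform bound $\widehat{t_i}>2$ supplies a single fraction realizing a horizontal foliation for all fibers at once. Carrying out this Diophantine check, together with the careful identification of the Seifert invariants of $\Sigma_2(L)$ from the reduced Montesinos parameters, is where the argument requires the most attention.
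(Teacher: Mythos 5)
Your architecture for cases (2) and (3) matches the paper's: pass to the branched double cover, a Seifert fibered space over $S^2$, dispose of $\det(L)=0$ separately, combine Boyer--Gordon--Watson (Theorem \ref{thm:ls}) with Boyer--Rolfsen--Wiest (Theorem \ref{thm:lo}), verify the Jankins--Neumann/Naimi realization condition, and reduce case (3) to case (2) by reflection (the paper implements this with $p$ negative flypes via Lemma \ref{lem:flype} and Equation (\ref{eqn:tbarf}), producing $M(-1;|t_1^f|,\ldots,|t_p^f|)$ in reduced form; your appeal to orientation-insensitivity of the L-space property is also valid by the duality $\widehat{HF}(-Y)\cong\widehat{HF}(Y)$). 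Two issues in that part: the verification you defer as ``the hard part'' of case (2) is in fact immediate --- with $\varepsilon=-1$ and $\widehat{t_i}>2$ for all $i$ one simply takes $(m,a)=(2,1)$ in Theorem \ref{thm:hf}, and your proposal never actually performs even this plug-in --- and you skip the hypothesis of Theorem \ref{thm:lo} that $\pi_1$ be infinite, which the paper checks via the condition $\sum_{i=1}^p 1/\alpha_i\leq p-2$, satisfied when $p\geq 3$ and $t_i>2$.

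The genuine gap is case (1). You assert that for $1-p<\varepsilon<-1$ the Seifert data lie ``strictly interior to the realizable interval, so a horizontal foliation exists for any admissible fiber invariants,'' but you cite no criterion that applies in that range. Theorem \ref{thm:hf}, the only realizability statement in play, is stated for the normal form $S(0;-1,\alpha_1/\beta_1,\dots,\alpha_n/\beta_n)$ with every $\alpha_i/\beta_i>1$; to reach $e_0=-1$ from $\varepsilon\leq -2$ you must absorb the surplus negative integers into fiber coefficients, which destroys the condition $\alpha_i/\beta_i>1$, so the theorem as stated says nothing about the middle range. The claim you need is true, but it requires the general Eisenbud--Hirsch--Neumann/Jankins--Neumann realization statement for arbitrary Euler integer (products of lifted circle homeomorphisms with prescribed rotation numbers equal to $\mathrm{sh}(-\varepsilon)$), which you neither state, prove, nor reference precisely --- so as written case (1) is unsupported. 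The paper avoids this entirely and more cheaply: for $1-p<\varepsilon<-1$, Proposition \ref{prop:adq-alt} yields a non-alternating \emph{adequate} diagram, adequacy forces thick Khovanov homology \cite{Khovanov}, and homological thickness obstructs quasi-alternating \cite{Manolescu}. Thus your proposal, once the deferred check is filled in, covers cases (2) and (3) along the paper's lines, but case (1) needs either the stronger foliation criterion you implicitly invoke or the paper's Khovanov-thickness argument.
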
 

\begin{proof}
Case (1) implies that $|\varepsilon+p/2|<p/2-1$ and, by Proposition \ref{prop:adq-alt}, $L$ has a non-alternating and adequate diagram. The Khovanov homology of a link $L$ with such a diagram is thick \cite{Khovanov}, which implies that $L$ is not quasi-alternating \cite{Manolescu}.

For case (2), assume that $L$ is in reduced form with $\varepsilon=-1$ and $t_i=\widehat{t_i}>2$ for all $i=1,\ldots,p$. 
We will show that the double branched cover $\Sigma(L)$ is not an $L$-space.
A closed, connected 3-manifold $Y$ is an \emph{$L$-space} if it is a rational homology sphere with the property that the rank of its Heegaard Floer homology $HF(Y)$ equals $|H_1(Y;\Z)|$.
Recall that the branched double cover $\Sigma(L)$ of a Montesinos link $L$ is the orientable Seifert fibered space 
$S(0;\varepsilon,t_1,\dots,t_p)$ with base orbifold $S^2$ \cite{Montesinos}.
The manifold $\Sigma(L)$ is a rational homology sphere iff $\det(L)\neq 0$ 
(see for example \cite{lickorish}).
If $\det(L)=0$ then $L$ is non-quasi-alternating.
Otherwise, the following theorem provides the $L$-space obstruction. 
First, define a group $G$ to be \emph{left-orderable} if there exists a left invariant strict total ordering on $G$. 

\begin{theorem}[\cite{Boyer-Gordon-Watson}] A closed connected Seifert fibered 3-manifold $X$ is not an $L$-space iff $\pi_1(X)$ is left-orderable.
\label{thm:ls}\end{theorem}

\noindent The next result offers the exact conditions for an orientable Seifert fibered space to have a left-orderable fundamental group.

\begin{theorem}[\cite{Boyer-Rolfsen-Wiest}] Let $X$ be an orientable Seifert fibered 3-manifold which is a rational homology sphere. Then $\pi_1(X)$ is left-orderable iff $\pi_1(X)$ is infinite, the base orbifold of $X$ is the 2-sphere with cone points, and $X$ admits a horizontal foliation.
\label{thm:lo}\end{theorem}

\noindent The fundamental group $\pi_1(\Sigma(L))$ is infinite if $\Sigma_{i=1}^p 1/\alpha_i \leq p-2$, where $t_i=\alpha_i/\beta_i$ \cite{Burde-Zieschang}.
This condition is satisfied for $p\geq 3$ and $t_i>2$. 
Thus it remains to show that the space $\Sigma(L)$ admits a \emph{horizontal foliation}; i.e., a foliation which is everywhere transverse to the Seifert fibers.
The following result provides the conditions under which a Seifert fibered space admits a horizontal foliation.

\begin{theorem}[\cite{Eisenbud-Hirsch-Neumann},\cite{Jankins-Neumann},\cite{Naimi}] Let $S=S(0;-1, \alpha_1/\beta_1,\dots, \alpha_n/\beta_n)$ be an orientable Seifert fibered 3-manifold, where $n \geq 3$ and $\alpha_i/\beta_i> 1$ are rational numbers. Then $S$ admits a horizontal foliation iff there exist relatively prime integers $0 < a < m$ such that 
$$
\frac{\alpha_{\sigma(1)}}{\beta_{\sigma(1)}} >\frac{m}{a}, \quad
\frac{\alpha_{\sigma(2)}}{\beta_{\sigma(2)}} >\frac{m}{m-a}, \quad
\frac{\alpha_{\sigma(i)}}{\beta_{\sigma(i)}} >m, 
$$ 
where $3\leq i \leq n$ and $\sigma$ is a permutation of $\{1,2,...,n\}$.
\label{thm:hf}\end{theorem}

\noindent Given $\varepsilon=-1$ and $t_i>2$ for all $i=1,\dots, p$, Theorem \ref{thm:hf} implies that, for the choice of $m=2$ and $a=1$, the Seifert fibered space $\Sigma(L)$ admits a horizontal foliation. 
The fundamental group $\pi_1(\Sigma(L))$ is left-orderable according to Theorem \ref{thm:lo}. 
Finally, $\Sigma(L)$ is not an L-space by Theorem \ref{thm:ls}, and, therefore, $L$ is non-quasi-alternating.

For case (3), assume that $L$ is in reduced form with $\varepsilon =
1-p$ and $|\widehat{t_i}^f| = |t_i^f| > 2$ for all $i=1,\ldots,p$. 
The reflection $L^r=M(p-1;\overline{t_1}, \ldots, \overline{t_p})$. Note that,
\begin{equation}
 t=\frac{\alpha}{\beta}>1,\ \ \ \  \overline{t}=\frac{-\alpha}{\beta},\ \ \ \  
\overline{t}^f=\frac{\alpha}{\alpha - \beta} = |t^f| > 1. 
\label{eqn:tbarf}
\end{equation}
These relations together with Lemma \ref{lem:flype} imply that the application of $p$ negative flypes on $L^r$ yields
$M(-1;|t_1^f|, \ldots, |t_p^f|)$, which is in reduced form by Equation 
(\ref{eqn:tbarf}).  Case (3) now follows from case (2) and the fact that
reflections of quasi-alternating links are quasi-alternating.
\end{proof}

\begin{remark}
There are more families of non-quasi-alternating Montesinos links accessible by the proof of Theorem \ref{thm:nqam}.  For example, by choosing $m=3$ and $a=2$ in the notation of Theorem \ref{thm:hf}, one easily shows that if $\widehat{t_1}>3/2$ and $\widehat{t_i} > 3$ for $i\geq 2$, then $L$ is not quasi-alternating.  
In fact, for any choice of $m$ with $a=m-1$, the link is non-quasi-alternating for $\widehat{t_1}>m/(m-1)$ and $\widehat{t_i}>m$, $i\geq 2$. 
In general, any Montesinos link whose parameters satisfy the hypothesis of Theorem \ref{thm:hf} is non-quasi-alternating by the same argument.
\end{remark}

The proof of Theorem \ref{thm:nqam} offers an alternative to the
4-manifold techniques Greene used to establish obstructions to the
quasi-alternating property of pretzel links.  A key step in the
classification of quasi-alternating pretzel links is Proposition 2.2
in \cite{Greene}, which states that the pretzel
$P(0;p_1,\dots,p_n,-q)$ is quasi-alternating iff
$q>\min\{p_1,\dots,p_n\}$, where $n\geq 2$, $p_1,\dots,p_n\geq 2$, and
$q\geq 1$.  We obtain an alternative obstruction in most cases.

\begin{prop}
For the same conditions above, the pretzel $P(0;p_1,...,p_n,-q)$ is non-quasi-alternating if $q+1<\min\{p_1,\ldots, p_n\}$.
\end{prop}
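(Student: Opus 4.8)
The plan is to recognize the pretzel $P(0;p_1,\dots,p_n,-q)$ as the Montesinos link $M(0;p_1,\dots,p_n,-q)$ in which the positive vertical twist regions are the tangles $t_i=p_i$ and the negative region is $t_{n+1}=-q$, so that it has $p=n+1\geq 3$ tangles, and then to run the obstruction of Theorem \ref{thm:nqam}(2) in the flexible form described in the Remark. First I would compute $\varepsilon$: since $p_i\geq 2$ gives $\lfloor 1/p_i\rfloor=0$ and $\lfloor 1/(-q)\rfloor=-1$, we get $\varepsilon=-1$. By Proposition \ref{lem:mont-reduce} the reduced form is $M(-1;p_1,\dots,p_n,q/(q-1))$ for $q\geq 2$, where $\widehat{-q}=q/(q-1)$, and hence the branched double cover is the Seifert fibered space $\Sigma(L)=S(0;-1,p_1,\dots,p_n,q/(q-1))$ over $S^2$. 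The goal is to show this is not an $L$-space, which by Theorems \ref{thm:ls} and \ref{thm:lo} reduces to producing a horizontal foliation via Theorem \ref{thm:hf}.

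The crux — and the place where the simple hypothesis $\widehat{t_i}>2$ of case (2) fails — is that the last reduced tangle $q/(q-1)=1+1/(q-1)$ is at most $2$, so one cannot take $m=2$, $a=1$. The idea is instead to choose $m=q+1$ and $a=q$, which are coprime with $0<a<m$, and to let the permutation $\sigma$ send the small tangle $q/(q-1)$ into the first slot. Then the three requirements of Theorem \ref{thm:hf} read $q/(q-1)>m/a=(q+1)/q$, one $p_i>m/(m-a)=q+1$, and the remaining $p_j>m=q+1$. The first holds automatically because $q^2>q^2-1$, and the others hold precisely because the hypothesis $q+1<\min\{p_1,\dots,p_n\}$ forces every $p_i\geq q+2>q+1$. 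Thus $\Sigma(L)$ admits a horizontal foliation.

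To finish I would check the remaining hypotheses exactly as in the proof of Theorem \ref{thm:nqam}(2). If $\det(L)=0$ then $L$ is already non-quasi-alternating, so one may assume $\det(L)\neq 0$, whence $\Sigma(L)$ is a rational homology sphere; the fundamental group is infinite since $\sum_{i=1}^n 1/p_i+1/q\leq n-1=p-2$ for $n\geq 2$ and $p_i\geq q+2$. Theorem \ref{thm:lo} then makes $\pi_1(\Sigma(L))$ left-orderable, Theorem \ref{thm:ls} makes $\Sigma(L)$ a non-$L$-space, and therefore $L$ is non-quasi-alternating. The degenerate value $q=1$ is handled separately: the single $-1$ region is absorbed into $e$ and the link reduces to $M(-1;p_1,\dots,p_n)$ with each $p_i\geq 3>2$, which is covered directly by Theorem \ref{thm:nqam}(2) whenever this remains a genuine Montesinos link. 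The main obstacle is exactly the choice of Seifert data $(m,a)$ in Theorem \ref{thm:hf}: the naive $m=2$ fails on the tangle $q/(q-1)$, and the point is to see that sliding it into the $m/a$ slot with $m=q+1$, $a=q$ is precisely what the gap hypothesis $q+1<\min\{p_1,\dots,p_n\}$ is designed to permit.
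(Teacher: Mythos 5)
Your proposal is correct and follows essentially the same route as the paper's proof: both identify the pretzel with the reduced Montesinos link $M(-1;p_1,\dots,p_n,q/(q-1))$ for $q>1$, apply Theorem \ref{thm:hf} with precisely the choice $m=q+1$, $a=q$ (using $q/(q-1)>(q+1)/q$ and $p_i\geq q+2>q+1$), and dispose of $q=1$ separately via case (2) of Theorem \ref{thm:nqam}. The only difference is that you spell out steps the paper leaves implicit (the $\det(L)=0$ dichotomy, the infinite $\pi_1$ estimate, and the left-orderability chain through Theorems \ref{thm:lo} and \ref{thm:ls}), which is a faithful elaboration rather than a new argument.
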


\begin{proof} 
If $q=1$, the pretzel $P(0;p_1,...,p_n,-q)$ is equivalent to the reduced Montesinos link $M(-1; p_1,\dots,p_n)$ with $2 <
 \min\{p_1,\dots,p_n\}$.  
According to case (2) of Theorem \ref{thm:nqam}, the links are non-quasi-alternating.  
If $q>1$, then the pretzel is equivalent to the reduced Montesinos link $M(-1;p_1,\dots,p_n,q/(q-1))$.  
The fact that $q/(q-1) > (q+1)/q$ implies that choosing $m=q+1$ and $a=q$, in the notation of Theorem \ref{thm:hf}, demonstrates that the branched double cover admits a horizontal foliation.
\end{proof}

\subsection*{6.1 Further questions and examples}

It is a natural question to ask whether the condition given in Theorem 5.3 is
necessary. Indeed Qazaqzeh, Chbili, and Qublan have asserted the following:

\begin{conjecture}[\cite{Khaled}]
A Montesinos link is quasi-alternating if and only if it satisfies the conditions of Theorem \ref{thm:qam}.
\end{conjecture}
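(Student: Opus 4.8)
The plan is to read the ``if'' direction off Theorem~\ref{thm:qam} directly, so that the entire content lies in the converse: every Montesinos link failing all four conditions of Theorem~\ref{thm:qam} is non-quasi-alternating. Writing out the negation, a link fails Theorem~\ref{thm:qam} precisely when $1-p\le\varepsilon\le -1$, together with the boundary clauses that if $\varepsilon=-1$ then $|\widehat{t_i}^f|\le\widehat{t_j}$ for all $i\neq j$, and if $\varepsilon=1-p$ then $|\widehat{t_i}^f|\ge\widehat{t_j}$ for all $i\neq j$. Using the reflection symmetry $\varepsilon(L^r)=-\varepsilon(L)-p$ of Lemma~\ref{lem:ref-sym} (which interchanges the two boundary clauses, exactly as in the passage from case~(2) to case~(3) of Theorem~\ref{thm:nqam}), I may assume $\varepsilon\ge -p/2$. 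For $p\ge 3$ this forces $1-p<\varepsilon$, so the strict range $1-p<\varepsilon<-1$ is already handled by case~(1) of Theorem~\ref{thm:nqam} via the thickness of Khovanov homology. The whole problem therefore collapses to a single crux case: $L=M(-1;\widehat{t_1},\dots,\widehat{t_p})$ in reduced form with $|\widehat{t_i}^f|\le\widehat{t_j}$ for all $i\neq j$.

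Next I would recast the boundary inequality in usable form. Writing $\tau_i=\widehat{t_i}>1$ and using $|\,\widehat{t}^f|=\tau/(\tau-1)$, a short computation shows that $|\widehat{t_i}^f|\le\widehat{t_j}$ is equivalent to $(\tau_i-1)(\tau_j-1)\ge 1$. Ordering $\tau_1\le\cdots\le\tau_p$, the crux hypothesis is then simply $(\tau_1-1)(\tau_2-1)\ge 1$. The main tool is the $L$-space obstruction of Theorem~\ref{thm:nqam}: by Theorems~\ref{thm:ls} and~\ref{thm:lo}, for $\Sigma(L)=S(0;-1,\tau_1,\dots,\tau_p)$, which is a rational homology sphere when $\det L\neq 0$ (the case $\det L=0$ being immediate), the link is non-quasi-alternating as soon as $\Sigma(L)$ carries a horizontal foliation. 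By Theorem~\ref{thm:hf} this amounts to producing coprime integers $0<a<m$ with $\tau_1>m/a$, $\tau_2>m/(m-a)$, and $\tau_i>m$ for $i\ge 3$; equivalently, a fraction $a/m$ in the open interval $(1/\tau_1,\,(\tau_2-1)/\tau_2)$ whose denominator satisfies $m<\tau_3$. The hypothesis $(\tau_1-1)(\tau_2-1)\ge 1$ is exactly what makes this interval nonempty, and whenever it contains such a fraction of small enough denominator the argument of Theorem~\ref{thm:nqam} applies verbatim and yields non-quasi-alternating.

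The hard part is precisely the residual gap where $(1/\tau_1,(\tau_2-1)/\tau_2)$ contains no fraction of denominator $m<\tau_3$. This occurs near the boundary $(\tau_1-1)(\tau_2-1)=1$ and when the remaining $\tau_i$ are close to $2$; the simplest instance is $M(-1;\tfrac{5}{2},\tfrac{5}{2},\tfrac{5}{2})$, which satisfies neither the sufficient condition of Theorem~\ref{thm:qam} nor the horizontal-foliation hypothesis of Theorem~\ref{thm:hf}. In this range the implications of Theorems~\ref{thm:lo} and~\ref{thm:ls} run the wrong way: the \emph{absence} of a horizontal foliation forces $\pi_1(\Sigma(L))$ to be non-left-orderable, hence $\Sigma(L)$ to be an actual $L$-space, so the Floer-theoretic and left-orderability obstructions are silent and cannot separate these links from quasi-alternating ones. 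Resolving the conjecture here requires a genuinely finer obstruction, the natural candidate being the lattice-embedding (Donaldson-type) argument that Greene~\cite{Greene} used to complete the pretzel classification: one represents $\Sigma(L)$ as the boundary of the negative-definite plumbing $4$-manifold determined by the Seifert data and shows, via the Heegaard Floer $d$-invariants, that its intersection lattice admits no embedding into the diagonal lattice forced by quasi-alternation.

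The decisive step, and the main obstacle, is to prove that this lattice obstruction vanishes \emph{exactly} on the quasi-alternating side of the boundary $(\tau_i-1)(\tau_j-1)=1$, so that together with the horizontal-foliation argument it accounts for the entire region $1-p\le\varepsilon\le-1$ left open by Theorem~\ref{thm:qam}. This is a delicate arithmetic matching problem: it requires translating the changemaker/lattice condition on the Seifert invariants back into the continued-fraction data $\widehat{t_i}$ and verifying that it reproduces the inequalities $|\widehat{t_i}^f|\le\widehat{t_j}$ precisely. I expect this matching, rather than any single obstruction, to be where the real difficulty lies, which is consistent with the statement being recorded here only as a conjecture.
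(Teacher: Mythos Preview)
The statement you are attempting to prove is recorded in the paper as a \emph{conjecture}; the paper does not give a proof, and in fact the text immediately following it says that Theorem~\ref{thm:nqam} ``partially resolves the conjecture'' and then lists families of Montesinos links (Section~6.1, items (1)--(5)) whose quasi-alternating status is either settled only by Greene's lattice methods or is genuinely unknown. So there is no ``paper's own proof'' to compare against, and indeed you correctly close your proposal by conceding that the decisive step is open and that this ``is consistent with the statement being recorded here only as a conjecture.'' In that sense your write-up is not really a proof attempt but a strategic analysis of what is missing, and as such it is largely accurate and tracks the paper's own discussion: the reduction by reflection to $\varepsilon\ge -p/2$, the use of Theorem~\ref{thm:nqam}(1) for the strict interior $1-p<\varepsilon<-1$, the isolation of the crux case $\varepsilon=-1$, the reformulation of the failed Theorem~\ref{thm:qam} condition as $(\tau_i-1)(\tau_j-1)\ge 1$, the role of Theorem~\ref{thm:hf}, and the expectation that Greene-style lattice/$d$-invariant obstructions are what is needed in the residual range---all of this matches the paper.

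There are, however, two concrete slips. First, your illustrative example $M(-1;\tfrac{5}{2},\tfrac{5}{2},\tfrac{5}{2})$ is \emph{not} in the residual gap: it has every $\widehat{t_i}=\tfrac{5}{2}>2$, so it is already covered by Theorem~\ref{thm:nqam}(2) (equivalently, $m=2$, $a=1$ in Theorem~\ref{thm:hf} works). A genuine gap example must have some $\widehat{t_i}\le 2$, and the paper supplies several, e.g.\ $11n50=M(-1;\tfrac{5}{2},3,\tfrac{5}{3})$, where the interval $(1/\tau_1,(\tau_2-1)/\tau_2)$ degenerates. Second, and relatedly, your claim that $(\tau_1-1)(\tau_2-1)\ge 1$ ``is exactly what makes this interval nonempty'' is off by the boundary case: the open interval $(1/\tau_1,\,1-1/\tau_2)$ is nonempty iff $(\tau_1-1)(\tau_2-1)>1$ strictly, so the equality locus $(\tau_1-1)(\tau_2-1)=1$ already lands in the gap where Theorem~\ref{thm:hf} is silent. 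This is precisely the locus on which the paper's examples in Section~6.1 sit.
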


Theorem \ref{thm:nqam} partially resolves the conjecture. 
It remains to investigate Montesinos links whose parameters satisfy neither the conditions of Theorem \ref{thm:qam} nor the conditions for admitting a horizontal foliation given in Theorem \ref{thm:hf}.  
These are Montesinos links that may be non-quasi-alternating but whose double branched covers are L-spaces. 
Below we discuss several families of such links. \\

\begin{enumerate}

\item A Montesinos link $M(-1;t_1,t_2,\ldots, t_n)$ in reduced form and such that $|t_i^f|= t_j$, where $t_i$ and $t_j$ are the least and second least among the parameters $t_1,t_2,\ldots, t_n$.
Greene's first example of a non-quasi-alternating knot with thin homology, $11n50 = M(-1;5/2,3,5/3)$, is an example of such a link.  
In the preprint, he remarks that his proof generalizes to show that the infinite family $M(0;(m^2+1)/m,n,-(m^2+1)/m)= M(-1;(m^2+1)/m,n,(m^2+1)/(m^2-m+1))$ for positive integers $m, n\geq 2$ is non-quasi-alternating \cite{Greene}.\\

\item A pretzel link
$P(0;p_1,...,p_n,-q)=M(-1;p_1,\dots,p_n,q/(q-1))$ 
that satisfies the condition $q=\min\{p_1,\ldots, p_n\}$.
Any such link is known to be non-quasi-alternating \cite{Greene}. \\

\item A pretzel link 
$P(0;p_1,...,p_n,-q)=M(-1;p_1,\dots,p_n,q/(q-1))$, 
for which $q+1=\min\{p_1,\ldots, p_n\}$ and $p_i = q+1$ for all $i$.
However, if $p_i$ exceeds the numerator of a rational number between $q$ and $q+1$ for all $i$ except one, then the link will satisfy the conditions of Theorem \ref{thm:hf}. 
In general, the pretzels $P(0;p_1,...,p_n,-q)=M(-1;p_1,\dots,p_n,q/(q-1))$ 
for which $q+1=\min\{p_1,\ldots, p_n\}$ is known to be non-quasi-alternating \cite{Greene}.  
\\

\item The pretzel $P(0; 3, 3, 3, -2) = 11n81$ is such an example, and it has thick Khovanov homology. 
Since adding rational tangles preserves the width of Khovanov homology (\cite{lowrance, watson-1}), one may obtain infinite families of Montesinos links which do not satisfy either conditions. 
Having thick Khovanov homology, these are non-quasi-alternating. \\

\item Watson pointed us to another family of the form 
$M(0;(2n+1)/2, n+1, (-2n-1)/2 ) = M(-1;(2n+1)/2, n+1, (2n+2)/(2n-1) )$, where $n\geq 2$. 
See Figure \ref{fig:liam-examples}. 
Their quasi-alternating status is undetermined. 

\end{enumerate}

\begin{figure}[h]
\begin{center}
\includegraphics[height=1.5in]{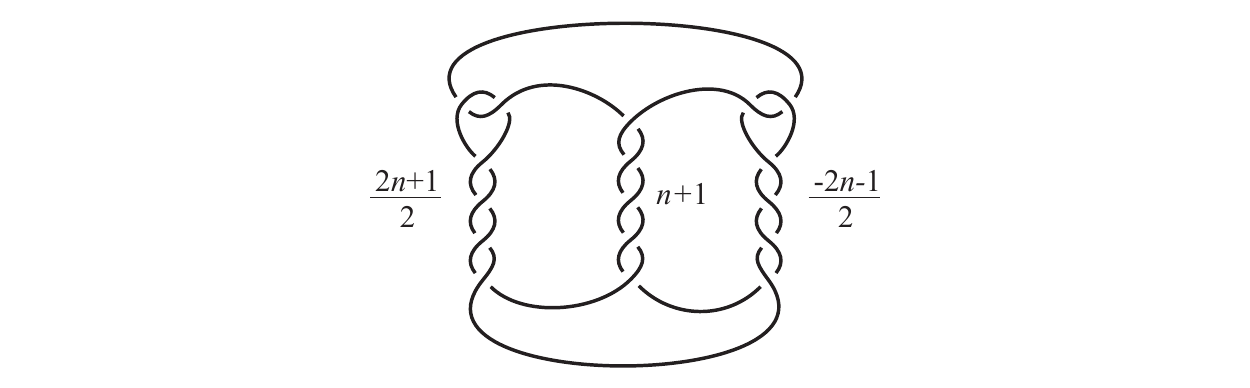}
\caption{Montesinos links $M(0;(2n+1)/2, n+1, (-2n-1)/2 )$ for $n\geq2$ do not satify the conditions of
  Theorem \ref{thm:qam} and Theorem \ref{thm:hf}. 
  }
\label{fig:liam-examples}
\end{center}
\end{figure}

\bibliographystyle{plain}
\bibliography{references}

\end{document}